\documentclass[reqno]{amsart}

\usepackage{amssymb,latexsym,amsmath,amsthm,amsbsy}
\usepackage[mathscr]{eucal}
\usepackage{framed,color,graphicx}
\usepackage{mathrsfs}
\usepackage{tikz}
\usepackage{booktabs}

\usetikzlibrary{positioning}

\tikzset{%
  element/.style={draw, shape=circle, fill=white, inner sep=1.4pt}
}

\DeclareSymbolFont{bbold}{U}{bbold}{m}{n}
\DeclareSymbolFontAlphabet{\mathbbold}{bbold}

\theoremstyle{plain}
\newtheorem{theorem}{Theorem}[section]
\newtheorem{lemma}[theorem]{Lemma}
\newtheorem{corollary}[theorem]{Corollary}
\newtheorem{proposition}[theorem]{Proposition}

\newtheorem{remark}[theorem]{Remark}

\newcommand{\A}{\mathbf A}

\newcommand{\F}{\mathbf F}

\newcommand{\abs}[1]{\lvert#1\rvert}

\renewcommand{\ge}{\geqslant}
\renewcommand{\le}{\leqslant}

\newcommand{\V}{\mathcal{V}}

\newcommand{\id}{\approx}
\newcommand{\Z}{\mathbb Z}
\newcommand{\Sc}{S_{\mathrm c}}

\begin{document}

\title[Finite Bases for Truncated Cyclic Group Flat Semirings]
{Finite Bases for Truncated Cyclic Group Flat Semirings with Two Independent Parameters}

\author{Qirui Ma}
\address{School of Mathematics, Northwest University, Xi'an, 710127, Shaanxi, P.R. China}
\email{qiruimath@yeah.net}

\author{Zidong Gao}
\address{School of Mathematics, Northwest University, Xi'an, 710127, Shaanxi, P.R. China}
\email{zidonggao@yeah.net}

\subjclass[2010]{16Y60, 03C05, 08B15, 08B26}
\keywords{semiring, variety, finite basis problem}

\begin{abstract}
For positive integers \(m,n\), let
\[
A_{m,n}=(\{1,\ldots,m\}\times\Z_n)\cup\{0\}
\]
have flat addition and multiplication truncated at degree \(m\). We prove
that \(A_{m,n}\) is finitely based exactly when \(m\le 2\) or
\((m,n)=(3,1)\). Explicit finite bases are supplied throughout this
region. Outside it, high-girth hypergraphs with a constant-sum rigidity
property yield finite countermodels to every bounded-variable fragment of
the equational theory. The proof places no divisibility or coprimality
restriction on the parameters.
\end{abstract}

\maketitle

\section{Introduction and classification}

All signatures in this paper consist of two binary operations
\((+,\cdot)\), with no constants. The zero occurring in a flat semiring is
absorbing for both operations and is the greatest element of the additive
semilattice. It is not an additive identity.

For independent positive integers \(m,n\), define
\[
A_{m,n}=\bigl(\{1,\ldots,m\}\times\Z_n\bigr)\cup\{0\},
\qquad
\Z_n=\Z/n\Z.
\]
The element \(0\) is newly adjoined. Addition satisfies \(s+s=s\) and
\(s+t=0\) for \(s\ne t\). Multiplication by \(0\) gives \(0\), and
\begin{equation}\label{product}
(i,g)(j,h)=
\begin{cases}
(i+j,g+h), & i+j\le m,\\
0, & i+j>m.
\end{cases}
\end{equation}
Thus \(\abs{A_{m,n}}=mn+1\). The diagonal case \(A_{p,p}\) is the integer
extension of the prime-parameter family in \cite[Section 4]{GRsource}.
The present paper separates the truncation length from the group modulus.

\begin{theorem}\label{main}
For all positive integers \(m,n\),
\[
A_{m,n}\text{ is finitely based}
\quad\Longleftrightarrow\quad
m\le 2\ \text{or}\ (m,n)=(3,1).
\]
Finite bases for all the positive cases are given in
Theorems~\ref{null}, \ref{rowtwo} and~\ref{cube}. In every other case and
for every \(r\ge1\), there exists a finite ai-semiring satisfying all
identities of \(A_{m,n}\) in at most \(r\) variables but not belonging to
\(\V(A_{m,n})\).
\end{theorem}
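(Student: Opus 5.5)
The theorem splits into a positive direction (finite bases) and a negative direction (no finite basis). I would handle them separately, describing the equational theory of \(A_{m,n}\) through a combinatorial invariant of terms. Every ai-semiring term is a finite sum of words. Under an assignment into \(A_{m,n}\), a sum is nonzero only if all its summands take the same nonzero value. A word \(w\) is nonzero only if every variable is sent to some \((i_x,g_x)\) with \(\sum_x \mathrm{occ}_x(w)\, i_x\le m\), and then its value is \(\bigl(\sum \mathrm{occ}_x(w) i_x,\ \sum \mathrm{occ}_x(w) g_x\bigr)\).

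I would first prove a characterization lemma. An identity \(u\approx v\) holds in \(A_{m,n}\) if and only if, for every weight function \(i\colon X\to\{1,\dots,m\}\) and every \(g\colon X\to\Z_n\), the following two sets agree after discarding words of weighted length \(>m\): the set of pairs (weighted length, weighted \(g\)-sum) attained by the words of \(u\), and the same set for \(v\). The identity must also agree on when the set collapses to \(0\). In practice this is governed by the occurrence vectors (contents modulo short words), counted with multiplicity up to degree \(m\), together with their residues modulo \(n\).

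For the positive cases I would proceed as follows.

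\begin{itemize}
\item When \(m=1\), every product is \(0\), so \(A_{1,n}\) is a flat semiring with null multiplication. Theorem~\ref{null} gives a basis built from \(xy\approx zw\), \(x+x\approx x\), and the absorption identities.
\item When \(m=2\), only products of two degree-one elements survive. The basis of Theorem~\ref{rowtwo} must encode that \(xyz\) is constant, that multiplication is commutative, and that the linear words \(xy\) carry the \(\Z_n\)-label. I would prove completeness by reducing every term to a canonical sum of words of length \(\le 2\) plus the constant. I would then show that two canonical forms with the same semantics coincide, using the characterization lemma with assignments supported on one or two variables, which separate the words.
\item For \((3,1)\) the group is trivial, and \(A_{3,1}\) is the flat extension of the truncated monogenic nilsemigroup \(\{a,a^2,a^3,0\}\). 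Theorem~\ref{cube} supplies its basis, and the same canonical-form argument applies, now with words of length \(\le 3\).
\end{itemize}

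For the negative direction, fix \((m,n)\) with \(m\ge3\) and \((m,n)\neq(3,1)\), and fix \(r\ge1\). The abstract suggests the following construction. Take a \(k\)-uniform hypergraph \(H\), with \(k\) close to \(m\) (e.g. \(k=m\) or \(3\)), whose girth exceeds \(r\). It should also have a constant-sum rigidity property: every labelling of the vertices by \(\Z_n\) (or by weights) that has constant sum on every edge is forced to be globally constant. When \(n=1\) one needs \(m\ge4\), and I would use weights \(\{1,\dots,m\}\) instead of group labels. From \(H\) build the ai-semiring \(S_H\) as the flat semiring generated by the vertices, where a product of vertices is nonzero exactly when it forms a subset of an edge. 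This is a Jackson-style flat hypergraph semiring, possibly twisted by \(\Z_n\)-labels.

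Two things must then be shown.

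\begin{enumerate}
\item \(S_H\notin\V(A_{m,n})\). The rigidity property should make \(S_H\) fail some identity of \(A_{m,n}\) with many variables, roughly one variable per vertex. Equivalently, no family of homomorphisms into \(A_{m,n}\) separates the points of \(S_H\), because separation would produce a non-constant edge-constant labelling.
\item \(S_H\) satisfies every \(r\)-variable identity of \(A_{m,n}\). Any assignment of at most \(r\) variables touches fewer vertices than the girth. The relevant subhypergraph is therefore a hyperforest, and on a hyperforest the finitely generated subsemiring embeds in, or is a homomorphic image of a subsemiring of, a power of \(A_{m,n}\). The embedding is built by peeling leaf edges and solving the \(\Z_n\)- and weight constraints edge by edge.
\end{enumerate}

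I expect the main obstacle to be the existence of hypergraphs with large girth and the rigidity property, simultaneously for all residual \((m,n)\). My first attempt would be a probabilistic or Erdős–Hajnal-type construction of high-girth hypergraphs, followed by a counting argument showing that random edge-constant labellings are constant with positive probability. Failing that, I would use an explicit algebraic (Cayley-type) high-girth construction over \(\Z_n\) that uses \(m\ge3\), or \(m\ge4\) when \(n=1\), to guarantee rigidity.
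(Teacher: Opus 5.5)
\paragraph{Assessment.} Citing Theorems~\ref{null}, \ref{rowtwo} and~\ref{cube} for the positive cases is fine. Your canonical-form sketch is not how those theorems are proved: the paper classifies the subdirectly irreducible models as $R_{s,t}$ or $C_t$. Nothing in your argument depends on that sketch, though.

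\paragraph{Gap in step (1).} You never write down the separating identity, and the criterion you call equivalent is false. If homomorphisms into $A_{m,n}$ fail to separate the points of $S_H$, that only shows $S_H$ does not embed in a power of $A_{m,n}$. It does not show $S_H\notin\V(A_{m,n})$.

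Concretely, since $a_v^2=0$, any nonconstant homomorphism $S_H\to A_{m,n}$ sends each $a_v$ to $0$ or to an element with first coordinate $>m/2$. It therefore kills every $a_va_w$, and hence kills $\omega$. The same computation applies to $Q_k$ and to hyperforest semirings $S_F$, which (for the right $k$) do lie in $\V(A_{m,n})$. So your criterion would contradict your own step~(2).

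No $\Z_n$-twist of $S_H$ is needed. What is needed is the identity
\[
\sum_{e\in E(H)}\prod_{v\in e}x_v\ \id\ \sum_{e\in E(H)}\prod_{v\in e}x_v+x_u^k .
\]
\begin{itemize}
\item In $A_{m,n}$: if the left side is nonzero, all edge products coincide, so the labels $(d_v,g_v)$ have constant edge sums. The degree bound, or Corollary~\ref{rigidity} in $\Z$, fixes the first coordinates. Corollary~\ref{rigidity} in $\Z_n$ fixes the second. Hence $x_u^k$ equals the common edge value.
\item In $S_H$: the assignment $x_v=a_v$ gives $\omega$ on the left and $0$ on the right, because $a_u^2=0$.
\end{itemize}

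\paragraph{Gap in step (2).} By the computation above, $S_F$ never embeds in a power of $A_{m,n}$. Building an embedding by peeling leaf edges therefore cannot succeed. You need a quotient of a generated subsemiring by an upward-closed ideal.

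The paper factors this through the squarefree semiring $Q_k$. Lemma~\ref{protection} and Proposition~\ref{divisor} give $Q_m\in\V(A_{m,n})$ for $n\ge2$ and $Q_{m-1}\in\V(A_{m,1})$. The imported hyperforest theorem gives $S_F\in\V(Q_k)$ for $k\ge3$.

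The choice of $k$ is not free. For $n=1$ you cannot take $k=m$: the identity $x_1\cdots x_m\id x_1\cdots x_m+x_1^m$ holds in $A_{m,1}$ but fails in $Q_m$ and in every $S_F$ with $k=m$.

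Your locality count is also too crude, though this is a minor repair. A generator is an arbitrary element of $S_H$ and can touch up to $k-1$ vertices. Linked $(k-1)$-subsets are identified through completing vertices outside the touched set. Lemma~\ref{local} handles this by enlarging to all edges meeting the chosen vertex set in at least two points, and it uses girth $>\max\{5,k\binom{kr}{2}\}$.

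\paragraph{Rigidity target.} The property required is that \emph{every} edge-constant labelling with at most $q$ values is constant. It is not enough that random such labellings are constant with positive probability.

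The paper obtains this deterministically from two expansion properties of an altered random hypergraph:
\begin{itemize}
\item every set of size $\ge N/q$ contains an edge;
\item every nonempty set of size $\le(1-1/q)N$ is met by some edge in exactly one vertex.
\end{itemize}
Rigidity then follows by a most-frequent-value argument.
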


\begin{center}
\begin{tabular}{lcc}
\toprule
 & \(n=1\) & \(n\ge2\)\\
\midrule
\(m=1,2\) & Finitely based & Finitely based\\
\(m=3\) & Finitely based & Nonfinitely based\\
\(m\ge4\) & Nonfinitely based & Nonfinitely based\\
\bottomrule
\end{tabular}
\end{center}

The general theory of flat algebras developed by Jackson \cite{Jackson08}
and the hypergraph methods of Ham and Jackson \cite{HamJackson} provide
the background. We use the flat-variety theorem and hyperforest
membership theorem of Jackson, Ren and Zhao \cite{JRZ}. Related
structural and limit-variety results appear in \cite{RJL,GRnil}. When
\(n=1\), our classification agrees with the published classification of
finite \(\Sc(W)\) by Wu, Zhao and Ren \cite{Wu}:
\(A_{m,1}\cong\Sc(a^m)\) is finitely based exactly for \(m\le3\). We
nevertheless give direct proofs for that boundary column as part of the
uniform argument.

For \(m\ge3,n\ge2\), a squarefree divisor supplies local membership in
\(\V(A_{m,n})\). The separating identity requires more than the regularity
calculation available on the diagonal. We construct hypergraphs for which
equal edge sums force any assignment with a bounded number of values to
be constant. A probabilistic proof of this property is included in full.
The argument uses elementary concentration and alteration techniques as
in \cite{AlonSpencer}.

\section{Algebraic preliminaries}

An ai-semiring satisfies the following six identities, whose set we denote
by \(\A\):
\begin{align*}
(x+y)+z&\id x+(y+z),&x+y&\id y+x,&x+x&\id x,\\
(xy)z&\id x(yz),&x(y+z)&\id xy+xz,&(x+y)z&\id xz+yz.
\end{align*}
Write \(x\le y\) when \(x+y=y\). A nonempty multiplicative ideal that is
upward closed in this order can be collapsed to a single class to form a
semiring quotient. In a flat semiring every nonempty multiplicative ideal
contains \(0\) and is upward closed.

\begin{lemma}\label{basic}
For all \(m,n\ge1\), the algebra \(A_{m,n}\) is a commutative flat
ai-semiring with \(A_{m,n}^{m+1}=\{0\}\) and \(A_{m,n}^m\ne\{0\}\).
\end{lemma}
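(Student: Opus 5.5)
We verify the axioms of \(\A\) directly, together with flatness, commutativity and the two power conditions. It helps to view the multiplicative reduct as a quotient of a monoid. Let \(P=(\mathbb{N}_{\ge1}\times\Z_n,\ast)\) with \((i,g)\ast(j,h)=(i+j,g+h)\); this is a commutative semigroup, being a subsemigroup of the direct product \((\mathbb{N},+)\times(\Z_n,+)\). The set \(I=\{(i,g): i>m\}\) is an ideal of \(P\). Hence the Rees quotient \(P/I\) is a commutative semigroup with zero, and it is exactly \((A_{m,n},\cdot)\), with \(0\) the class of \(I\) as in \eqref{product}. This gives associativity and commutativity of multiplication, and shows that \(0\) is multiplicatively absorbing.

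For addition, flatness holds by definition: \(s+s=s\), \(s+t=0\) for \(s\ne t\), and \(0\) absorbs. So \((A_{m,n},+)\) is the semilattice of height one with top \(0\). Commutativity and idempotence are immediate. For associativity, the sum \((x+y)+z\) equals \(x\) if \(x=y=z\), and equals \(0\) otherwise. The same holds for \(x+(y+z)\).

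The main step is distributivity. By commutativity it suffices to prove \(x(y+z)=xy+xz\).
\begin{itemize}
\item If \(y=z\), both sides equal \(xy\), by idempotence.
\item If \(y\ne z\), the left side is \(x\cdot0=0\), so we need \(xy+xz=0\). If \(xy=0\) or \(xz=0\), this holds. Otherwise \(x=(i,g)\), \(y=(j,h)\), \(z=(k,l)\) are all nonzero with \(i+j,i+k\le m\). Since \((j,h)\ne(k,l)\), the products \((i+j,g+h)\) and \((i+k,g+l)\) are distinct, by cancellation in \(\mathbb{Z}\times\Z_n\). Therefore their sum is \(0\).
\end{itemize}
This cancellativity of \(P\) away from the ideal is the only point with content. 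It is exactly the standard criterion for a flat extension of a partial cancellative semigroup to be a semiring.

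Finally, a product of \(m+1\) elements is either \(0\) (if some factor is \(0\)) or has first coordinate at least \(m+1\), and so is \(0\); thus \(A_{m,n}^{m+1}=\{0\}\). On the other hand, \((1,0)^m=(m,0)\ne0\), so \(A_{m,n}^m\ne\{0\}\).
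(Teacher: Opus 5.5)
Your proof is correct and is essentially the paper's argument. In both, the only substantive step is that nonzero products cancel in the integer and group coordinates, so \(ab=ac\ne0\) forces \(b=c\), which gives distributivity. Your Rees-quotient packaging of \(\{1,\ldots,m\}\times\Z_n\) is just a more explicit form of the paper's remark that the product rule is independent of bracketing, and the nilpotence step is the same.
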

\begin{proof}
A nonzero product is obtained by adding the positive first coordinates and
the group coordinates, provided the first-coordinate sum is at most \(m\).
Otherwise it is \(0\). This rule is independent of bracketing and is
commutative. If \(ab=ac\ne0\), cancellation in the integer and group
coordinates gives \(b=c\). If \(b\ne c\), it follows that
\(a(b+c)=0=ab+ac\); the case \(b=c\) follows from idempotence. Right
distributivity is analogous. The nilpotence assertions follow from
positivity and \((1,\overline0)^m=(m,\overline0)\ne0\).
\end{proof}

Let \(\F\) denote the sixteen identities
\begin{equation}\label{flat}
x_1ux_2+y_1uy_2+y_1vy_2\id x_1vx_2+y_1uy_2+y_1vy_2,
\end{equation}
with each of \(x_1,x_2,y_1,y_2\) independently retained or omitted.
Omission introduces no multiplicative identity. By \cite[Lemma 2.1]{JRZ},
\(\A\cup\F\) defines the variety generated by flat semirings, and every
nontrivial subdirectly irreducible member of that variety is flat.
References to numbered results of \cite{JRZ} follow its arXiv version. We
use equational completeness and subdirect representation in the standard
forms of \cite{BS}.

\begin{lemma}\label{annihilator}
A nontrivial subdirectly irreducible commutative flat nilpotent semiring
has exactly one nonzero annihilator element \(\omega\). Every other
nonzero element divides \(\omega\), allowing the element itself as a
divisor.
\end{lemma}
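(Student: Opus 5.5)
The plan is to locate the monolith of the subdirectly irreducible algebra \(S\) and read off \(\omega\) from it. Let \(S\) be nontrivial, subdirectly irreducible, commutative, flat and nilpotent, with \(S^{k+1}=\{0\}\) and \(S^k\ne\{0\}\) for some \(k\ge1\) (possible since \(S\) is nontrivial and nilpotent). Write \(\operatorname{Ann}(S)=\{a\in S: aS=\{0\}\}\). Then \(\operatorname{Ann}(S)\) is a multiplicative ideal containing \(0\), and it contains the nonzero elements of \(S^k\), so it has at least one nonzero element.

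\textbf{Uniqueness of \(\omega\).} For each nonzero \(a\in\operatorname{Ann}(S)\), I would show that the set \(\{a,0\}\) is a multiplicative ideal. Indeed \(aS=\{0\}\) and \(0\) is absorbing. By the remark preceding Lemma~\ref{basic}, ideals in a flat semiring are upward closed, since \(0\) is the top element and distinct nonzero elements are incomparable. So collapsing \(\{a,0\}\) gives a congruence \(\theta_a\), which is the Rees congruence. It is nontrivial because \(a\ne0\).

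Suppose \(a\ne b\) are two nonzero annihilators. Then \(\theta_a\cap\theta_b\) is the identity relation: the only nontrivial classes are \(\{a,0\}\) and \(\{b,0\}\), and their intersection is \(\{0\}\). This contradicts subdirect irreducibility, which requires the nontrivial congruences to have a nontrivial intersection, namely the monolith. Hence \(\operatorname{Ann}(S)=\{0,\omega\}\).

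\textbf{Divisibility of \(\omega\).} Take a nonzero \(a\ne\omega\). The set \(I_a=\{a\}\cup aS\cup\{0\}\) is the principal ideal generated by \(a\). Here commutativity matters: we do not need \(SaS\), because \(aS=Sa\) and \(a(st)=(as)t\). This set is an ideal, so \(\theta_{I_a}\) is a nontrivial congruence and must contain the monolith. The monolith is contained in \(\theta_\omega\), and since \(\theta_\omega\) is minimal (it has one pair), the monolith is \(\theta_\omega\). Thus \((\omega,0)\in\theta_{I_a}\). The Rees congruence of an ideal \(I\) identifies two distinct elements only if both lie in \(I\), so \(\omega\in I_a\). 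As \(\omega\ne a\) and \(\omega\ne0\), we get \(\omega\in aS\), that is, \(a\) divides \(\omega\). For \(a=\omega\) the statement allows \(\omega\) as its own divisor.

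\textbf{Where the care is needed.} The main point to check is that a Rees quotient by an ideal really is a semiring congruence in the additive sense, i.e.\ that it is compatible with \(+\). If \(x\equiv y\) with both \(x,y\in I\), then for any \(z\), flatness gives that \(x+z\) is either \(0\) or equal to \(x\), and in either case it lies in \(I\) (using \(0\in I\)). The same holds for \(y+z\), so \(x+z\equiv y+z\).

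The other point needing care is the claim that a Rees congruence identifies \(\omega\) with \(0\) only when \(\omega\) lies in the ideal. This is immediate, because the classes of a Rees congruence are \(I\) and singletons.

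Nilpotence is used only to guarantee that a nonzero annihilator exists. Without it there might be no \(\omega\).
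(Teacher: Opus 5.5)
Your proof is correct, and for existence and uniqueness of \(\omega\) it matches the paper. For existence you take a nonzero element of the top nonzero power \(S^k\), while the paper takes a maximal nonzero extension of a product; for uniqueness both use the intersection of the two Rees congruences of \(\{0,v\}\) and \(\{0,w\}\).

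The divisibility step takes a genuinely different route. The paper reruns the extension argument starting from \(a\): keep multiplying while the product stays nonzero. Nilpotence stops this at a nonzero annihilator, which must be \(\omega\) by uniqueness. You instead use subdirect irreducibility a second time. You identify the monolith as the Rees congruence \(\theta_\omega\) of \(\{0,\omega\}\), and observe that the Rees congruence of the principal ideal \(\{a\}\cup aS\cup\{0\}\) must contain it, forcing \(\omega\in aS\).

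Your route shows that divisibility holds in any subdirectly irreducible commutative flat semiring once a nonzero annihilator exists, so nilpotence is needed only for existence. It also makes the monolith explicit. The cost is checking that Rees quotients by principal ideals are semiring congruences, including compatibility with \(+\), and you do that correctly.

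The paper's route is shorter and uses one mechanism for both existence and divisibility. It also yields slightly more: every maximal chain of nonzero extensions of \(a\) ends at \(\omega\), not just some product \(as\).
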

\begin{proof}
Starting from any nonzero element, extend a nonzero product as long as
possible. Nilpotence bounds its length, and a maximal extension is
annihilated by every element. Thus a nonzero annihilator exists. Two
distinct nonzero annihilators \(v,w\) would give nontrivial ideal
congruences collapsing \(\{0,v\}\) and \(\{0,w\}\) whose intersection is
equality, contradicting subdirect irreducibility. The same extension
argument starting from any nonzero element terminates at this unique
\(\omega\).
\end{proof}

\section{Finite bases at truncation lengths one and two}

\subsection{Null multiplication}

\begin{theorem}\label{null}
For every \(n\ge1\), a finite basis for \(A_{1,n}\) is
\[
\A\cup\{xy\id zt,\quad x+yz\id yz\}.
\]
In particular all the semirings \(A_{1,n}\) generate the same variety.
\end{theorem}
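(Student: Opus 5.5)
For \(m=1\), every product of two elements of \(A_{1,n}\) is \(0\), because \(1+1>1\). So \(A_{1,n}\) is a flat semilattice of height one: the elements \((1,g)\) sit below the top element \(0\), and all products equal \(0\). The plan is to show soundness first, then completeness via a normal form for terms modulo \(\Sigma=\A\cup\{xy\id zt,\ x+yz\id yz\}\).

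\emph{Soundness.} The identity \(xy\id zt\) holds in \(A_{1,n}\) because both sides evaluate to \(0\). The identity \(x+yz\id yz\) holds because \(yz=0\) and \(0\) is absorbing for addition.

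\emph{Normal forms.} Using \(\A\), every term is a sum of words. If some word has length at least \(2\), we do the following. By \(xy\id zt\), all words of length at least \(2\) are equal to a fixed product \(p\); for instance, a word of length \(k\ge 3\) becomes \(p\) by substituting for \(x\) and \(y\) in \(xy\id zt\). Then \(x+yz\id yz\) absorbs every other summand into \(p\). So each term is equivalent modulo \(\Sigma\) to one of two forms:
\begin{itemize}
\item a sum \(x_1+\dots+x_k\) of distinct variables, with the set \(\{x_1,\dots,x_k\}\) determined;
\item the single product \(xx\), all of whose instances are equal modulo \(\Sigma\).
\end{itemize}

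\emph{Completeness.} Suppose \(s\id t\) holds in \(A_{1,n}\); we show it follows from \(\Sigma\) by comparing normal forms.
\begin{itemize}
\item If both sides are products, they are equal by \(xy\id zt\).
\item If one side is a nonempty sum of variables and the other is a product, assign distinct nonzero elements to the variables. With \(n\ge2\) we can take \((1,0)\) and \((1,1)\); if \(n=1\), send all variables to \((1,0)\). A single variable then evaluates to a nonzero element while the product evaluates to \(0\), which is a contradiction. We need the variable sum to evaluate to something nonzero, so we send every variable to the same element \((1,0)\): the sum is then \((1,0)\) by idempotence, and the product is \(0\).
\item If both sides are variable sums with different variable sets, say \(x\) occurs only in \(s\), assign \(x\mapsto0\) and every other variable \(\mapsto(1,0)\). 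Then \(s=0\) and \(t=(1,0)\). One caveat: \(t\) must be nonempty, which it is, since terms are nonempty.
\end{itemize}
Hence the identities of \(A_{1,n}\) are exactly the consequences of \(\Sigma\).

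The basis \(\Sigma\) does not depend on \(n\), so all the \(A_{1,n}\) generate the same variety. The main care point is verifying the normal-form reduction, in particular that longer words collapse to \(p\) via \(xy\id zt\) with \(x\) replaced by a subword. That step is routine.
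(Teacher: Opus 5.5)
Your proof is correct and follows essentially the paper's argument: use \(xy\approx zt\) and \(x+yz\approx yz\) to collapse all products to a single additively absorbing value, reduce every term to that product or to a sum of distinct variables, and separate the normal forms using only \(0\) and \((1,\overline0)\). The one blemish is the false start in your second completeness case. Assigning distinct nonzero values to the variables would make a multi-variable sum evaluate to \(0\), so delete that sentence and keep only your corrected assignment of \((1,\overline0)\) to every variable.
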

\begin{proof}
All products in \(A_{1,n}\) are \(0\), so the identities hold. In an
arbitrary model, \(xy\id zt\) makes multiplication constant, with value
\(o\), and \(x+yz\id yz\) makes \(o\) the additive maximum. Every term is
therefore either \(o\) or a nonempty sum of distinct variables. Different
variable sets are distinguished in the subsemiring
\(\{(1,\overline0),0\}\) of \(A_{1,n}\) by assigning a variable in their
symmetric difference to \(0\) and all other variables to
\((1,\overline0)\). Assigning all variables to \((1,\overline0)\)
separates a variable sum from \(o\). This proves completeness.
\end{proof}

\subsection{Loop and matching semirings}

For cardinals \(s,t\), let \(R_{s,t}\) have elements \(0,\omega\), \(s\)
loop vertices, and \(t\) pairs of vertices. Each loop has square
\(\omega\), and the product of the two vertices of each pair is
\(\omega\). All other products are \(0\), and addition is flat. In
particular, \(R_{0,0}=\{0,\omega\}\). Write
\[
\mathcal B=\A\cup\F\cup\{xy\id yx,\quad x_1x_2x_3\id y_1y_2y_3\}.
\]
This has 24 indexed identities, with no claim of minimality.

\begin{lemma}\label{three-nil}
Every nontrivial subdirectly irreducible model of \(\mathcal B\) is
isomorphic to some \(R_{s,t}\).
\end{lemma}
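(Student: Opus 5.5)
Let \(S\) be a nontrivial subdirectly irreducible model of \(\mathcal B\). Since \(\mathcal B\supseteq\A\cup\F\), \cite[Lemma 2.1]{JRZ} makes \(S\) flat, with absorbing top element \(0\). The identity \(x_1x_2x_3\id y_1y_2y_3\) makes every triple product equal to one constant \(c\). Substituting \(0\) for \(y_1\) gives \(c=0\), so \(S^3=\{0\}\). Multiplication is commutative by \(xy\id yx\), so \(S\) is a commutative flat nilpotent semiring. Lemma~\ref{annihilator} then supplies a unique nonzero annihilator \(\omega\), and every other nonzero element divides \(\omega\).

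Next I determine the products. Let \(a,b\) be nonzero with \(ab\ne0\). Then \(ab\cdot z=0\) for all \(z\) because \(S^3=\{0\}\), so \(ab\) is a nonzero annihilator, and uniqueness forces \(ab=\omega\). Thus every product lies in \(\{0,\omega\}\). Now let \(V=S\setminus\{0,\omega\}\). Each \(v\in V\) is not an annihilator, because \(\omega\) is the only nonzero one, so \(vu=\omega\) for some \(u\); this \(u\ne\omega\) since \(\omega\) annihilates. Hence each \(v\in V\) has at least one partner in \(V\), where a vertex may be its own partner. Form the graph on \(V\) whose edges (including loops) are the pairs with product \(\omega\). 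To conclude \(S\cong R_{s,t}\), I must show that every vertex has exactly one partner.

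Uniqueness of partners is the main step, and I would use flatness through cancellation. Suppose \(vu=vu'=\omega\) with \(u\ne u'\). Then \(v(u+u')=vu+vu'=\omega+\omega=\omega\). On the other hand \(u+u'=0\) in a flat semiring, so \(v(u+u')=0\). This contradicts \(\omega\ne0\). Therefore partners are unique, and the graph is a disjoint union of loops (squares equal to \(\omega\)) and matched pairs. All products not on an edge are \(0\), and addition is flat. Counting loops as \(s\) and pairs as \(t\) gives \(S\cong R_{s,t}\). The case \(V=\emptyset\) gives \(R_{0,0}\).

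Subdirect irreducibility is used only through Lemma~\ref{annihilator}, which guarantees the unique \(\omega\). I expect no step beyond this cancellation argument to need extra care. One should also check that \(\omega\) itself is not a vertex of the graph. This holds because \(\omega V=\{0\}\).
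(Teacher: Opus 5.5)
Your proof is correct and follows the same route as the paper's. You use flatness with \(S^3=\{0\}\), the unique annihilator \(\omega\) from Lemma~\ref{annihilator}, existence of partners, and uniqueness of partners via \(v(u+u')=vu+vu'\); commutativity then makes the partner relation an involution whose fixed points are loops and whose other orbits are pairs. You also make explicit a step the paper leaves implicit: every nonzero product equals \(\omega\), so products of non-partners vanish.
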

\begin{proof}
The model is flat and every triple product is \(0\). By
Lemma~\ref{annihilator}, there is a unique nonzero annihilator
\(\omega\). Every vertex outside \(\{0,\omega\}\) has a partner with
product \(\omega\). The partner is unique: in a flat semiring,
\(xy=xz\ne0\) with \(y\ne z\) contradicts \(x(y+z)=xy+xz\).
Commutativity makes the partner map an involution. Its fixed points are
loops and its other orbits are pairs.
\end{proof}

\begin{lemma}\label{amplification}
For finite \(s,t\), the following inclusions hold:
\[
\begin{array}{ll}
R_{s,0}\in\V(R_{2,0}),&R_{0,t}\in\V(R_{0,1}),\\
R_{1,t}\in\V(R_{1,1}),&R_{s,t}\in\V(R_{2,1}).
\end{array}
\]
The corresponding statements hold for infinite index sets whenever the
displayed restriction on the number of loops is respected.
\end{lemma}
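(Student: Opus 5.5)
The plan is to realize each $R_{s,t}$ as a subsemiring of a direct power of the corresponding small generator, modulo an ideal collapse, i.e.\ as a homomorphic image of a subalgebra of a product. The convenient device is coordinatewise products together with the ideal quotient described in Section~2: a nonempty upward-closed multiplicative ideal may be collapsed to $0$. Concretely, for a generator $R$ with annihilator $\omega$, I would work in the power $R^k$. I pick vertex tuples $v_1,\dots,v_N\in R^k$ whose pairwise products are as prescribed, together with the tuple $\bar\omega=(\omega,\dots,\omega)$. The subsemiring $S$ they generate consists of the $v_i$, their products, and sums. Any tuple containing a $0$ coordinate that is not one of the chosen elements lies in an ideal. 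Indeed, a sum of distinct chosen elements differs in some coordinate and so acquires a $0$ there. The set $I$ of tuples in $S$ other than the $v_i$ and $\bar\omega$ is closed upward and under multiplication, since $\bar\omega$ times anything nonzero in $S$ is $0$ in $R_{s,t}$. Collapsing $I$ yields $R_{s,t}$, provided the products $v_iv_j$ are either $\bar\omega$ or lie in $I$.

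The combinatorial core is therefore the choice of coordinates. Each coordinate is a map from the vertex set of $R_{s,t}$ to $R$ that respects the loop and pair structure, or sends vertices to $0$. For $R_{s,0}\in\V(R_{2,0})$ with loops $a,b$, a coordinate indexed by an unordered pair $\{i,j\}$ of distinct loops sends $v_i\mapsto a$ and $v_j\mapsto b$, and all others to $0$. Add one coordinate per loop $i$ sending $v_i\mapsto a$ and the rest to $0$. A product $v_iv_i$ is then $\omega$ on every coordinate where $v_i$ is nonzero, but $0$ elsewhere. To fix this, I would instead require every $v_i$ to be nonzero everywhere. In each coordinate, let each loop map to $a$ or $b$, with the coordinates chosen so that any two distinct loops are separated; then $v_iv_j$ has an $ab=0$ coordinate and hence lies in $I$. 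With two loops available, the binary codes of distinct indices separate any $s$ loops, and in the infinite case we use $2^{\kappa}$-many coordinates, or equally $\lceil\log_2 s\rceil$ of them. Likewise for $R_{0,t}\in\V(R_{0,1})$, with pair $c,d$: every coordinate sends each pair $\{x_i,y_i\}$ to $(c,d)$ or $(d,c)$, chosen via codes so that for $i\ne j$ some coordinate gives $x_i,x_j$ (or $x_i,y_j$) different images. Then $x_iy_i=\bar\omega$. Since $c^2=d^2=0$, all products $x_ix_j$ with $i\ne j$ vanish somewhere, and also $x_ix_i=0$. I still need $x_iy_j$ for $i\ne j$ to hit some $cc$ or $dd$ coordinate, so the codes must distinguish $i$ from $j$, which is again binary coding. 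The mixed cases combine the two. For $R_{s,t}\in\V(R_{2,1})$, loops use $\{a,b\}$ and pair vertices use $\{c,d\}$, and cross products $a c$ are $0$ automatically. For $R_{1,t}$ the single loop maps to the loop of $R_{1,1}$ in every coordinate, its square is $\bar\omega$, and it kills every pair vertex.

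The main obstacle I expect is verifying that $I$ is genuinely an ideal and that no unwanted element survives. In particular, sums $v_i+v_j$ must become $0$; this holds because they differ in a coordinate, where flat addition gives $0$. Products of $\bar\omega$ must also vanish, which holds because the annihilator times anything is $0$ in each factor. Finally, I must check that in each construction every distinct pair of vertices is separated by a coordinate whose product is $0$. I would check these three facts uniformly once, then just list the coding for each of the four inclusions. The infinite statements follow identically, using infinitely many coordinates with the allowed loop counts.
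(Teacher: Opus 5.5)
Your overall architecture matches the paper's. You realise each $R_{s,t}$ as a quotient of the subsemiring of a power generated by vertex tuples, by collapsing an upward-closed ideal. Your loop coding is also fine: any injective assignment of codes into $\{a,b\}$ works. Using infinitely many coordinates for infinite index sets is legitimate too, since varieties are closed under arbitrary products; the paper instead argues through finite complete-component subsemirings.

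\textbf{The gap is in the pair coding.} Write $f_i$ for the orientation function of pair $i$, so $x_i(\kappa)=c$ iff $f_i(\kappa)=0$, and $y_i$ is opposite. Then $x_ix_j$ vanishes at $\kappa$ only when $f_i(\kappa)=f_j(\kappa)$, and $x_iy_j$ vanishes at $\kappa$ only when $f_i(\kappa)\neq f_j(\kappa)$. So for $i\neq j$ you need $f_j$ to differ both from $f_i$ and from its complement $\bar f_i$.

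Your claim that ``since $c^2=d^2=0$, all products $x_ix_j$ with $i\ne j$ vanish somewhere'' is therefore false in general. Plain binary coding with $\lceil\log_2 t\rceil$ coordinates does not exclude complementary codes; the full code $\{0,1\}^k$ always contains them. Concretely, take $t=2$ with one coordinate and codes $0,1$. Then $x_2=y_1$ and $y_2=x_1$ as tuples, so the generated subsemiring is just $R_{0,1}$, not $R_{0,2}$. The two pairs have collapsed and $x_1x_2=\bar\omega$. Your uniform check that distinct vertices have zero sum also fails, because the tuples are no longer distinct. The same defect affects the pair part of your constructions for $R_{1,t}$ and $R_{s,t}$.

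\textbf{The fix} is what the paper does: take all binary functions on the pair indices as coordinates. Alternatively, add one coordinate in which every pair has the same orientation. The constant function gives a coordinate where $x_i,x_j$ agree, and a function separating $i$ from $j$ gives one where $x_i,y_j$ agree.

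\textbf{A smaller point.} Your reason for $I=S\setminus\{v_i,\bar\omega\}$ being an ideal (``$\bar\omega$ times anything nonzero in $S$ is $0$'') does not establish it. It is cleaner, as in the paper, to collapse all tuples of $S$ having a zero coordinate. In a power of a flat semiring this is visibly a nonempty upward-closed ideal. You then verify that the survivors are exactly the pairwise distinct vertex tuples and $\bar\omega$, which is precisely where the correct coding is needed.
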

\begin{proof}
For \(R_{s,0}\), use a coordinate for every function from the \(s\) loop
indices into the two loop vertices of \(R_{2,0}\). The tuple for an index
records its image. Its square is constantly \(\omega\), while any two
different tuples differ at a coordinate, making both their sum and their
product zero there.

For \(R_{0,t}\), use all binary functions on the \(t\) pair indices. The
two tuples of a pair take opposite endpoints of the unique pair of
\(R_{0,1}\) in every coordinate. Their product is constantly \(\omega\);
squares are zero. Two tuples from different pairs agree somewhere, making
their product zero there, and distinct tuples differ somewhere, making
their sum zero there.

For \(R_{1,t}\) use the same pair construction in \(R_{1,1}\) and add the
constant loop tuple. Cross products between loop and pair tuples are zero.
For \(R_{s,t}\) use all independent choices of one of the two loops for
each loop index and of an orientation for each pair index in \(R_{2,1}\).
The preceding separations hold simultaneously.

In each construction take the generated subsemiring and collapse all
tuples having a zero coordinate. This is an upward-closed multiplicative
ideal. The only remaining tuples are the specified vertices and the
constant \(\omega\) tuple: all longer monomials are zero, all unwanted
quadratic monomials have a zero coordinate, and so do sums of distinct
remaining tuples. The quotient is the claimed semiring. Cases with no
vertex generators reduce to the common subsemiring \(R_{0,0}\). Infinite
cases follow because every finite subset lies in a subsemiring containing
finitely many complete components, and identities are evaluated on
finitely many elements.
\end{proof}

\begin{theorem}\label{rowtwo}
For \(n\ge1\), a finite basis for \(A_{2,n}\) is \(\mathcal B\) together
with the entry in the following table:
\begin{center}
\begin{tabular}{ll}
\toprule
Modulus & Additional identity\\
\midrule
\(n=1\) & \(xy\id x^2+y^2\)\\
\(n=2\) & \(x^2+yz\id x^2+yz+y^2\)\\
\(n\ge3\) odd & \((x+y)^2\id x^2+y^2\)\\
\(n\ge4\) even & none\\
\bottomrule
\end{tabular}
\end{center}
The varieties generated in these four cases are, respectively,
\[
\V(R_{1,0}),\quad \V(R_{2,0},R_{0,1}),\quad
\V(R_{1,1}),\quad \V(R_{2,1}).
\]
\end{theorem}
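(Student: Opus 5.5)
The plan is to describe \(\V(A_{2,n})\) by the semirings \(R_{s,t}\) in two directions. First, \(A_{2,n}\) is a subdirect product of some \(R_{s,t}\), which gives soundness and identifies the generated variety. Second, Lemma~\ref{three-nil} and Lemma~\ref{amplification} show that every subdirectly irreducible model of the proposed basis lies in that variety, which gives completeness. Soundness of \(\mathcal B\) is immediate from Lemma~\ref{basic}: \(A_{2,n}\) is commutative and flat, so \(\F\) holds by \cite[Lemma 2.1]{JRZ}, and \(A_{2,n}^3=\{0\}\) gives \(x_1x_2x_3\id y_1y_2y_3\).

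\emph{Decomposition.} For each \(h\in\Z_n\), I collapse the set \(\{0\}\cup\{(2,h'):h'\ne h\}\). This set is an upward-closed multiplicative ideal, since every product lands in degree \(2\) or at \(0\). The quotient has annihilator \(\omega=(2,h)\), and each \((1,g)\) has the unique partner \((1,h-g)\). The loops are the solutions of \(2g=h\). Hence the quotient is \(R_{s_h,t_h}\), where \(s_h=\abs{\{g:2g=h\}}\) and \(s_h+2t_h=n\). These congruences intersect to equality: distinct degree-one elements are never identified, and \((2,h)\ne(2,h')\) are separated by the quotient at \(h\). So \(\V(A_{2,n})=\V(R_{s_h,t_h}:h\in\Z_n)\). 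The four cases come out as follows.
\begin{itemize}
\item For \(n=1\), the only quotient is \(R_{1,0}\).
\item For odd \(n\ge3\), every quotient is \(R_{1,(n-1)/2}\).
\item For \(n=2\), the quotients are \(R_{2,0}\) and \(R_{0,1}\).
\item For even \(n\ge4\), the quotients are \(R_{2,(n-2)/2}\) and \(R_{0,n/2}\).
\end{itemize}
Note that \(R_{1,1}\) is a subsemiring of \(R_{1,t}\) for \(t\ge1\), and \(R_{2,1}\) is a subsemiring of \(R_{2,t}\) for \(t\ge1\). Combining this with Lemma~\ref{amplification}, the generated varieties are \(\V(R_{1,0})\), \(\V(R_{1,1})\), \(\V(R_{2,0},R_{0,1})\) and \(\V(R_{2,1})\), as claimed. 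Soundness of the extra identities then reduces to checking them on these generators.
\begin{itemize}
\item \(xy\id x^2+y^2\) on \(R_{1,0}\): both sides are \(\omega\) exactly when \(x=y\) is the loop.
\item \((x+y)^2\id x^2+y^2\) on \(R_{1,1}\): for \(x\ne y\) the left side is \(0\), and the right side is \(0\) because at most one of \(x,y\) has nonzero square.
\item \(x^2+yz\id x^2+yz+y^2\) on \(R_{2,0}\) and \(R_{0,1}\): if the left side is nonzero, then \(x^2=yz=\omega\). In \(R_{2,0}\) this forces \(y=z\) to be a loop, so \(y^2=\omega\). In \(R_{0,1}\) we have \(x^2=0\), so this case cannot occur.
\end{itemize}

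\emph{Completeness.} Let \(S\) be a nontrivial subdirectly irreducible model of the basis. By Lemma~\ref{three-nil}, \(S\cong R_{s,t}\) for some cardinals \(s,t\), and I show the extra identity restricts \((s,t)\) to the variety above.
\begin{itemize}
\item For even \(n\ge4\), there is no restriction, and Lemma~\ref{amplification} gives \(R_{s,t}\in\V(R_{2,1})\).
\item For odd \(n\), two distinct loops \(x,y\) give \((x+y)^2=0\) but \(x^2+y^2=\omega\). So \(s\le1\), and \(R_{s,t}\) lies in \(\V(R_{1,1})\): directly if \(s=1\), and as a subsemiring of \(R_{1,t}\) if \(s=0\).
\item For \(n=2\), a loop \(x\) together with a pair \(y,z\) violates the identity, since the left side is \(\omega\) and \(y^2=0\). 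So \(s=0\) or \(t=0\), and the amplification lemma places \(S\) in \(\V(R_{2,0},R_{0,1})\).
\item For \(n=1\), a pair \(x,y\) gives \(\omega\) versus \(0\), and two distinct loops give \(0\) versus \(\omega\). So \(S\) is \(R_{0,0}\) or \(R_{1,0}\).
\end{itemize}
Since every variety is generated by its subdirectly irreducible members, the basis defines exactly \(\V(A_{2,n})\).

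The main point of care is the infinite-cardinal case of Lemma~\ref{amplification} and the bookkeeping of the degenerate quotients. Both are already supplied by that lemma, so I expect the only real work to be the decomposition step, namely verifying that the chosen ideals give a subdirect representation.
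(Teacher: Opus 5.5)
Your proposal is correct and follows the paper's proof essentially step for step. It uses the same quotients collapsing all degree-two elements but one, the same identification of these quotients as \(R_{s,t}\) through the solutions of \(2g=h\), Lemma~\ref{amplification} (plus the subsemiring inclusions) to pin down the four varieties, and Lemma~\ref{three-nil} together with the loop/pair obstructions for completeness; your explicit checks of the extra identities on the generating \(R_{s,t}\) simply spell out the validity the paper asserts more briefly.
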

\begin{proof}
For \(r\in\Z_n\), collapse the ideal consisting of \(0\) and all
degree-two elements except \((2,r)\). The resulting quotient \(T_r\) has
the \(n\) degree-one vertices, with partner involution
\(g\mapsto r-g\), and annihilator \((2,r)\). The kernels of these
quotient maps have equality as their intersection, so
\[
\V(A_{2,n})=\V(T_r:r\in\Z_n).
\]
Loops solve \(2g=r\). If \(n=1\), the quotient is \(R_{1,0}\). If
\(n=2\), the two quotients are \(R_{2,0}\) and \(R_{0,1}\). For odd
\(n\ge3\), each quotient has one loop and \((n-1)/2\) pairs, contains
\(R_{1,1}\), and belongs to its variety by Lemma~\ref{amplification}.
For even \(n\ge4\), the quotients with solvable \(2g=r\) have two loops
and \((n-2)/2\ge1\) pairs; the others have only pairs.
Lemma~\ref{amplification} gives exactly the four stated varieties.

It remains to identify the models of the proposed bases. By
Lemma~\ref{three-nil}, it suffices to examine \(R_{s,t}\). The identity
\(xy\id x^2+y^2\) rules out pairs and permits at most one loop. The
second identity rules out simultaneous loops and pairs: substitute a loop
for \(x\) and a pair for \(y,z\); conversely this is its only possible
failure. The third identity permits at most one loop, since two distinct
loop vertices have squares \(\omega\) but their sum is \(0\); all other
assignments satisfy it. With no additional identity there is no
restriction. These conditions and Lemma~\ref{amplification} show that
every subdirectly irreducible model belongs to the appropriate variety
above. Validity in that variety and subdirect representation prove
completeness.
\end{proof}

\section{An explicit basis for the exceptional semiring}\label{exception-section}

The algebra \(A_{3,1}\) is \(C=\{a,a^2,a^3,0\}\cong\Sc(a^3)\), with
\(a^4=0\). Its finite basability is already a consequence of \cite{Wu}.
Here is a direct basis and completeness proof.

\begin{theorem}\label{cube}
An identity basis for \(A_{3,1}\) is \(\A\cup\F\) together with
\begin{align}
xy&\id yx,\label{cC}\\
x_1x_2x_3x_4&\id y_1y_2y_3y_4,\label{cN}\\
(x+y)^2&\id x^2+y^2,\label{cP2}\\
(x+y)^3&\id x^3+y^3,\label{cP3}\\
x^2+yz&\id x^2+yz+y^2,\label{cE}\\
x^2+y^3&\id x^4,\label{cD}\\
xyz&\id xyz+x^3.\label{cR}
\end{align}
Thus 29 indexed identities suffice in the binary signature.
\end{theorem}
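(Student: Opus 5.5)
The proof has two parts: first check that every listed identity holds in \(C\), then show that every subdirectly irreducible model of the proposed basis lies in \(\V(C)\). Completeness then follows from subdirect representation, as in Theorem~\ref{rowtwo}.

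\emph{Validity.} The identities \(\A\), \(\F\), (\ref{cC}) and (\ref{cN}) are immediate from Lemma~\ref{basic}. The remaining ones are short case checks using two facts: in \(C\) only \(a\) has a nonzero square or cube, and a nonzero triple product forces all three factors to equal \(a\).
\begin{itemize}
\item For (\ref{cP2}) and (\ref{cP3}): if \(x\ne y\), the left side is \(0\), and \(x^k+y^k\) is also \(0\), since nonzero \(x^k=y^k\) forces \(x=y=a\).
\item For (\ref{cD}): both sides are \(0\), because \(x^2\ne0\ne y^3\) forces \(x=y=a\), and then \(a^2+a^3=0\).
\item For (\ref{cR}): if \(xyz\ne0\), then \(x=y=z=a\) and \(x^3=xyz\).
\item For (\ref{cE}): if \(yz\ne0\) and \(y^2\ne yz\), then \(y^2\) is either \(0\) or an element different from \(yz\), so the left side already equals the right side, or both are \(0\).
\end{itemize}

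\emph{Models.} Let \(S\) be a nontrivial subdirectly irreducible model. By \cite[Lemma 2.1]{JRZ}, \(S\) is flat. It is commutative, and (\ref{cN}) makes every fourfold product equal to \(0\), because \(0\) is absorbing. Lemma~\ref{annihilator} then gives a unique nonzero annihilator \(\omega\). The argument splits on whether some triple product is nonzero.

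\emph{Case 1: all triple products vanish.} Then \(S\) models \(\mathcal B\), so by Lemma~\ref{three-nil} it is some \(R_{s,t}\). Identity (\ref{cP2}) allows at most one loop, and (\ref{cE}) forbids a loop together with pairs. So \(S\) is \(R_{1,0}\), \(R_{0,0}\), or \(R_{0,t}\).
\begin{itemize}
\item \(R_{1,0}\) is the quotient of \(C\) by the ideal \(\{0,a^3\}\).
\item \(R_{0,1}\) is obtained from \(C^2\): take the subsemiring generated by \((a,a^2)\) and \((a^2,a)\), and collapse the tuples with a zero coordinate. The squares \((a^2,0)\) and \((0,a^2)\) die, while the product \((a^3,a^3)\) survives.
\item For \(R_{0,t}\), Lemma~\ref{amplification} gives \(R_{0,t}\in\V(R_{0,1})\).
\end{itemize}

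\emph{Case 2: some \(xyz=\omega\).} By (\ref{cR}) and commutativity, \(x^3=y^3=z^3=\omega\). Then (\ref{cP3}) forces \(x=y=z\), since two distinct elements with cube \(\omega\) would give \(0=\omega+\omega=\omega\). Hence there is a unique \(a\) with \(a^3=\omega\). Identity (\ref{cD}) with \(y=a\) gives \(u^2+\omega=0\) for every \(u\), so no square equals \(\omega\). The goal is to show \(S=\{0,a,a^2,\omega\}\cong C\). Suppose \(b\notin\{0,a,a^2,\omega\}\). By Lemma~\ref{annihilator}, \(b\) divides \(\omega\).
\begin{itemize}
\item If \(b\) occurs in a nonzero triple product, then \(b=a\) by the above.
\item Otherwise \(bc=\omega\) for some \(c\). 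Since \(\omega\) is not a square, \(c\ne b\).
\end{itemize}
Now use (\ref{cE}) with \(x=a\): this gives \(a^2+bc=a^2+bc+b^2\). Also \(a^2+\omega=0\) by (\ref{cD}). So this substitution yields nothing directly.

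I therefore expect to use subdirect irreducibility more sharply. The ideal \(\{0,\omega\}\) is the monolith, so every pair of distinct nonzero elements must be separated by a multiplicative context that sends one of them to \(\omega\) and the other not. Combining this with (\ref{cP2}) and (\ref{cE}), I would show that any extra vertex \(b\) either coincides with \(a\) or \(a^2\), or yields a proper decomposition contradicting subdirect irreducibility. Here (\ref{cE}) controls pairs against squares, and (\ref{cP2}) allows only one element with a nonzero square.

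This last classification is the main obstacle. If the pure contradiction fails, the fallback is to embed \(S\) in a product of \(C\) with copies of \(R_{0,1}\) and take a quotient, in the style of Lemma~\ref{amplification}.
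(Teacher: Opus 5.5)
Your validity check and Case~1 are sound. Realising \(R_{0,1}\) in \(C^2\) and then applying Lemma~\ref{amplification} is a legitimate alternative to the paper, which obtains \(R_{0,t}\) as a subsemiring of \(C_t\). In the check of \eqref{cE}, you should say explicitly that \(x^2=yz\ne0\) forces \(y=z=a\).

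The genuine gap is Case~2: its target \(S\cong C\) is false. Let \(C_t\) be \(C\) with \(t\ge1\) disjoint pairs \(b_i,b_i'\) adjoined, where \(b_ib_i'=a^3\) and all other new products are \(0\).
\begin{itemize}
\item \(C_t\) satisfies every identity of the proposed basis. It is flat, commutative and \(4\)-nilpotent. Only \(a\) has a nonzero square or cube, \(x^2=yz\ne0\) still forces \(y=z=a\), \(a^2+a^3=0\), and \(aaa\) is the only nonzero triple product.
\item \(C_t\) is subdirectly irreducible. In a flat semiring, a congruence identifying \(u\ne v\) identifies \(u=u+u\) with \(u+v=0\). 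Since every nonzero element of \(C_t\) divides \(a^3\), it then identifies \(a^3\) with \(0\).
\end{itemize}
So no monolith-separation argument can remove your extra vertex \(b\). Your observation that \eqref{cE} with \(x=a\) yields nothing is precisely why such pairs survive.

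What the identities actually give is the following. A nonzero product that is not an annihilator extends to a nonzero triple product, so it equals \(a\cdot a\). Hence \(bc=\omega\) for a partner \(c\). This partner is unique by flatness, satisfies \(c\ne b\) by \eqref{cD}, and satisfies \(c\notin\{a,a^2\}\) by nonzero cancellation. Thus \(S\cong C_t\) for some cardinal \(t\).

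The real work is then to show \(C_t\in\V(C)\), and your fallback does not supply it. Factors \(R_{0,1}\) cannot help, because they are \(3\)-nilpotent. The cube of any tuple vanishes in those coordinates, so the nonzero coincidence \(bb'=a^3\) has to be manufactured in a power of \(C\).

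The paper's construction works in \(C^T\) with \(T=\{1,2\}^{\{1,\dots,t\}}\). It uses the constant tuple \(z=a\) and the complementary tuples \(b_i^0(f)=a^{f(i)}\) and \(b_i^1(f)=a^{3-f(i)}\). Varying one coordinate of \(f\) in an exponent count shows that the only monomials equal to \(w=z^3\) are \(z^3\) and the products \(b_i^0b_i^1\). The quotient by the complement of the set of divisors of \(w\) is then \(C_t\).

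Collapsing only the tuples with a zero coordinate, as in Lemma~\ref{amplification}, would not work here. For example, \(zb_1^0\) has no zero coordinate but is not a divisor of \(w\). Infinite \(t\) then follows by local finiteness.
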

\begin{proof}
\emph{Validity.} The only nonzero square in \(C\) is \(a^2\), attained
only at \(a\), and the only nonzero cube is \(a^3\), also attained only
at \(a\). This proves \eqref{cP2} and \eqref{cP3}. If
\(x^2+yz\ne0\), then \(x^2=yz=a^2\), forcing \(y=z=a\); this proves
\eqref{cE}. A square and a cube can never have a common nonzero value,
proving \eqref{cD}. A nonzero triple product requires \(x=y=z=a\),
proving \eqref{cR}. Commutativity, four-nilpotence and the flat
identities hold as well.

\emph{Subdirectly irreducible models.} Let \(S\) be a nontrivial
subdirectly irreducible model. It is commutative, flat and
four-nilpotent, with a unique nonzero annihilator \(\omega\) by
Lemma~\ref{annihilator}.

Suppose first that \(S^3=\{0\}\). By Lemma~\ref{three-nil},
\(S=R_{s,t}\). Identity \eqref{cP2} permits at most one loop, and
\eqref{cE} prevents loops and pairs from coexisting. Hence \(S\) is
\(R_{1,0}\) or \(R_{0,t}\), including \(t=0\).

Otherwise some \(xyz\ne0\). Four-nilpotence implies \(xyz=\omega\), and
\eqref{cR}, applied with each permutation of the variables, gives
\(x^3=y^3=z^3=\omega\). Identity \eqref{cP3} shows that any two elements
with nonzero cube must be equal: if \(u\ne v\) and
\(u^3=v^3=\omega\), its two sides are \(0\) and \(\omega\). Thus there
is exactly one such element \(a\), every nonzero triple product is
\(a^3=\omega\), and \(a,a^2,\omega,0\) are distinct.

If a nonzero product \(uv\) is not an annihilator, it has a nonzero
extension \(uvw\). The preceding argument forces \(u=v=w=a\), so
\(uv=a^2\). All other nonzero products are \(\omega\). Nonzero
cancellation gives \(av=a^2\) only for \(v=a\) and \(av=\omega\) only
for \(v=a^2\); similarly, \(a^2v\ne0\) only for \(v=a\). Every
remaining vertex has a partner among the remaining vertices, because it
is not an annihilator. Its square cannot be \(\omega\), by \eqref{cD}
with \(y=a\). Thus the remaining vertices form disjoint pairs with
product \(\omega\).

Write \(C_t\) for the semiring formed by adjoining \(t\) such pairs to
the cubic chain \(C\), all other new products being zero. We have proved
that \(S\) is one of \(R_{1,0}\), \(R_{0,t}\) or \(C_t\), with possibly
infinite \(t\).

\emph{Realising the models in \(\V(C)\).} First
\(R_{1,0}\cong C/\{0,a^3\}\). For finite \(t\ge1\), use the coordinate
set \(T=\{1,2\}^{\{1,\ldots,t\}}\) and work in \(C^T\). Let \(z\) be
constantly \(a\), and define
\[
b_i^0(f)=a^{f(i)},\qquad b_i^1(f)=a^{3-f(i)}.
\]
Let \(B\) be the subsemiring generated by \(z\) and these tuples, and put
\(w=z^3\). All coordinates of \(w\) equal \(a^3\). A nonempty monomial
with exponents \(c\) on \(z\) and \(e_i^0,e_i^1\) on the pair tuples can
equal \(w\) only if
\[
c+\sum_i\bigl(e_i^0 f(i)+e_i^1(3-f(i))\bigr)=3
\quad\text{for every }f\in T.
\]
Varying one coordinate choice gives \(e_i^0=e_i^1\) for every \(i\), and
then \(c+3\sum_i e_i^0=3\). Thus the monomial must be either \(z^3\) or
\(b_i^0b_i^1\) for one \(i\).

It follows that the divisors of \(w\) in \(B\), including \(w\) itself,
are exactly
\[
D=\{z,z^2,w\}\cup\{b_i^0,b_i^1:1\le i\le t\}.
\]
To justify the statement for arbitrary elements rather than only
monomials, write any equation \(bc=w\) as a product of sums of monomials.
Each cross product must equal \(w\) coordinatewise. Fixing one factor and
using nonzero cancellation in every coordinate shows that all monomials
in the other factor have the same value. The monomial classification then
gives the displayed list. If an element equals \(w\) without a further
factor, flatness likewise forces all its monomials to equal \(w\).

The complement \(J=B\setminus D\) is a multiplicative ideal, since a
factor of a divisor of \(w\) is again a divisor of \(w\). It is upward
closed: if \(b+c\in D\), the target has no zero coordinate, forcing
\(b=c\) equal to that target. The tuples in \(D\) are distinct, and the
same monomial classification shows that in \(B/J\) the only nonzero
products are the cubic-chain products and \(b_i^0b_i^1=w\). Also any two
distinct tuples in \(D\) have sum in \(J\). Consequently
\(B/J\cong C_t\), so \(C_t\in\V(C)\). The case \(t=0\) is \(C\) itself.
Removing \(a,a^2\) from \(C_t\) gives \(R_{0,t}\) as a subsemiring.
Infinite cases follow by containment of finite subsets in finite
complete-component subsemirings.

All subdirectly irreducible models of the proposed basis therefore lie in
\(\V(C)\). Subdirect representation completes the proof.
\end{proof}

\section{Squarefree divisors}\label{divisor-section}

For \(k\ge1\), let \(Q_k=\Sc(a_1\cdots a_k)\): its nonzero elements are
the nonempty subsets of \([k]\), with disjoint union as the nonzero
multiplication, and its addition is flat. The order is \(2^k\), including
\(0\).

\begin{lemma}\label{protection}
Let \(B\) be a subsemiring of a power of a commutative flat nilpotent
semiring, generated by tuples \(\alpha_1,\ldots,\alpha_k\). Suppose that
every squarefree product
\(\alpha_I=\prod_{i\in I}\alpha_i\), \(\varnothing\ne I\subseteq[k]\),
has no zero coordinate, and that a nonempty monomial evaluating to
\(\alpha_I\) must have exponent vector \(\mathbf1_I\). Then \(Q_k\) is a
quotient of \(B\).
\end{lemma}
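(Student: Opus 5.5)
We follow the pattern of the realisation step in the proof of Theorem~\ref{cube}. Let
\[
D=\{\alpha_I:\varnothing\ne I\subseteq[k]\}\subseteq B
\qquad\text{and}\qquad J=B\setminus D .
\]
The aim is to show three things: \(J\) is a nonempty, upward closed multiplicative ideal; the map \(I\mapsto\alpha_I\) is injective; and in \(B/J\) the only nonzero products are \(\alpha_I\alpha_K=\alpha_{I\cup K}\) for disjoint \(I,K\). Then \(B/J\cong Q_k\), with \(J\) collapsing to \(0\).

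\textbf{Step 1: classify elements of \(D\).} We first show that an element \(b\in B\) lies in \(D\), and equals \(\alpha_I\), only if \(b\) is a sum of monomials each having exponent vector \(\mathbf 1_I\). Every element of \(B\) is a finite sum of nonempty monomials in the generators. Suppose \(b=\sum_j \mu_j=\alpha_I\). Since \(\alpha_I\) has no zero coordinate, flatness applied coordinatewise forces every \(\mu_j\) to equal \(\alpha_I\) in every coordinate. The hypothesis then gives \(\mu_j\) exponent vector \(\mathbf 1_I\), so each \(\mu_j\) is literally \(\alpha_I\). Injectivity follows at once: if \(\alpha_I=\alpha_K\), the monomial \(\alpha_K\) evaluates to \(\alpha_I\), so \(\mathbf 1_K=\mathbf 1_I\).

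\textbf{Step 2: \(J\) is an ideal, i.e.\ if \(bc\in D\) then \(b,c\in D\).} Write \(b=\sum\mu_j\) and \(c=\sum\nu_l\), and suppose \(bc=\alpha_I\). By distributivity, \(bc\) is the sum of the cross products \(\mu_j\nu_l\). Flatness together with the absence of zero coordinates forces each \(\mu_j\nu_l=\alpha_I\). Each \(\mu_j\nu_l\) is a monomial evaluating to \(\alpha_I\), so its exponent vector is \(\mathbf 1_I\). Hence each \(\mu_j\) is \(\alpha_{I_j}\) for some nonempty \(I_j\subseteq I\), since monomials are nonempty. Fix \(l\). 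Coordinatewise nonzero cancellation in the flat semiring (as in Lemma~\ref{three-nil}) gives \(\mu_j=\mu_{j'}\) as tuples, so \(b\) is a single \(\alpha_{I'}\in D\). Symmetrically \(c\in D\), which gives the ideal property.

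Nonemptiness of \(J\) is handled the same way. The product \(\alpha_1^2\) is a monomial with exponent vector different from every \(\mathbf 1_I\), so it is not in \(D\) and therefore lies in \(J\).

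\textbf{Step 3: \(J\) is upward closed.} Suppose \(b\le c\) with \(b\in J\), so \(b+c=c\). If \(c\in D\), then \(c\) has no zero coordinates. Flatness then gives \(b=c\) coordinatewise, contradicting \(b\in J\). So \(c\in J\).

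\textbf{Step 4: the quotient.} The collapse of \(J\) is a semiring congruence. In \(B/J\), the product \(\alpha_I\alpha_K\) is the monomial with exponent vector \(\mathbf 1_I+\mathbf 1_K\). It lies in \(D\) iff this vector is some \(\mathbf 1_L\), i.e.\ iff \(I\cap K=\varnothing\), and then it equals \(\alpha_{I\cup K}\). Distinct elements of \(D\) have sum in \(J\) by the flatness argument of Step~3. So the quotient has flat addition, and \(B/J\cong Q_k\).

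The main obstacle is Step~2, passing from statements about monomials to arbitrary elements of \(B\). The cancellation argument handles it, but we must be careful that "no zero coordinate" is used in every coordinate, so that flatness pins down each summand.
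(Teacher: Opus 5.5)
Your proof is correct and follows the paper's argument. You collapse $J=B\setminus D$, use flatness and the absence of zero coordinates to get upward closure and to force every cross product in $bc=\alpha_I$ to equal $\alpha_I$, and then read off $B/J\cong Q_k$ from uniqueness of exponent vectors. The differences are cosmetic: you witness $J\neq\varnothing$ by $\alpha_1^2$ rather than by the zero tuple from nilpotence, and you show the monomials of a factor coincide by coordinatewise cancellation rather than by subtracting exponent vectors.
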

\begin{proof}
Let \(D\) consist of these squarefree products and let
\(J=B\setminus D\). The hypotheses imply that the products in \(D\) are
distinct. Nilpotence puts the zero tuple in \(J\). If \(b+c\in D\),
flatness in each coordinate gives \(b=c\) equal to the target, so
\(J+B\subseteq J\).

If \(bc=\alpha_I\), expand both factors into sums of monomials. Every
cross product evaluates to \(\alpha_I\), and hence the sum of its
exponent vectors is \(\mathbf1_I\). Fixing either factor shows that all
monomials in the other factor have the same exponent vector. The two
factors are therefore squarefree products for disjoint nonempty subsets
of \(I\). This proves \(JB\subseteq J\). Thus \(J\) is an upward-closed
ideal. In the quotient, disjoint subsets multiply to their union and
intersecting subsets multiply to \(J\) by uniqueness of exponent vectors.
Distinct products in \(D\) have a zero coordinate in their sum. The
quotient is \(Q_k\).
\end{proof}

\begin{proposition}\label{divisor}
The following inclusions hold:
\begin{enumerate}
\item \(Q_m\in\V(A_{m,n})\) for all \(m\ge1\) and \(n\ge2\);
\item \(Q_{m-1}\in\V(A_{m,1})\) for all \(m\ge2\).
\end{enumerate}
\end{proposition}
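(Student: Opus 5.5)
Both parts follow from Lemma~\ref{protection}. In each case I will choose \(k\) tuples \(\alpha_1,\ldots,\alpha_k\) in a finite power of the relevant \(A_{m,n}\) and check the lemma's two hypotheses:
\begin{itemize}
\item[(a)] every squarefree product \(\alpha_I\) has no zero coordinate;
\item[(b)] a nonempty monomial evaluating to \(\alpha_I\) must have exponent vector \(\mathbf 1_I\).
\end{itemize}
The lemma then makes \(Q_k\) a quotient of a subsemiring of a power of \(A_{m,n}\), hence \(Q_k\in\V(A_{m,n})\). The algebra is commutative, flat and nilpotent by Lemma~\ref{basic}, so the lemma applies. Condition (a) is a degree count. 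Condition (b) is the real content: the monomial's value must be pinned down in enough coordinates to recover its exponent vector.

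For part (1), take \(k=m\) and \(n\ge2\). Index coordinates by \(j\in[m]\) and set \(\alpha_i(j)=(1,\delta_{ij})\), where \(\delta_{ij}\in\{\overline0,\overline1\}\subseteq\Z_n\).

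Condition (a): every generator has first coordinate \(1\), so \(\alpha_I\) has degree \(\abs I\le m\) in every coordinate. Hence it has no zero coordinate, and its value at \(j\) is \((\abs I,\mathbf 1_I(j))\).

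Condition (b): suppose a monomial with exponent vector \(e\) equals \(\alpha_I\). The first coordinate gives \(\sum_i e_i=\abs I\). The group part at coordinate \(j\) gives \(e_j\equiv\mathbf 1_I(j)\pmod n\). Since \(n\ge2\), every \(j\in I\) has \(e_j\not\equiv0\), so \(e_j\ge1\). Then \(\sum_{j\in I}e_j\ge\abs I=\sum_j e_j\) forces \(e_j=1\) on \(I\) and \(e_j=0\) off \(I\). This step is exactly where \(n\ge2\) is used.

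For part (2), take \(n=1\), so \(A_{m,1}\) is the chain \(a,\ldots,a^m\). Now only degrees are available to encode exponents, so a generator must be told apart by having a larger degree in its own coordinate. Take \(k=m-1\), index coordinates by \(j\in[m-1]\), and set \(\alpha_i(j)=a^2\) if \(i=j\) and \(a\) otherwise.

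Condition (a): the degree of \(\alpha_I\) at coordinate \(j\) is \(\abs I+\mathbf 1_I(j)\le(m-1)+1=m\). So no coordinate is zero.

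Condition (b): for a monomial with exponent vector \(e\), write \(S=\sum_i e_i\). Equality with \(\alpha_I\) at coordinate \(j\) says \(S+e_j=\abs I+\mathbf 1_I(j)\). Hence \(c:=e_j-\mathbf 1_I(j)=\abs I-S\) is independent of \(j\). Summing over the \(m-1\) coordinates gives \(S-\abs I=(m-1)c\), that is \(-c=(m-1)c\). Since \(m\ge2\), this forces \(c=0\), so \(e=\mathbf 1_I\).

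The only step needing care is the design of the tuples so that (b) holds while the degree budget \(m\) is respected. That budget explains why only \(m-1\) generators fit when \(n=1\): one unit of degree is spent on the extra marking factor. Once the tuples are fixed, both verifications are the short calculations above, and Lemma~\ref{protection} finishes the argument.
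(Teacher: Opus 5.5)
Your proposal is correct and matches the paper's proof: it uses the same tuples, \(\alpha_i(j)=(1,\overline{\delta_{ij}})\) in \(A_{m,n}^m\) and \(\alpha_i(j)=a^{1+\delta_{ij}}\) in \(A_{m,1}^{m-1}\), and the same verification of the hypotheses of Lemma~\ref{protection}. Your constant-offset argument with \(c=e_j-\mathbf 1_I(j)\) in part (2) is a rephrasing of the paper's summation step, which gives \((k+1)d=(k+1)\abs I\).
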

\begin{proof}
For (1), in \(A_{m,n}^m\) set
\(\alpha_i(j)=(1,\overline{\delta_{ij}})\). A nonempty monomial of degree
\(d\le m\) has coordinates \((d,\overline e_j)\). Equality with
\(\alpha_I\) forces \(d=\abs I\) and \(e_j\equiv1\pmod n\) for
\(j\in I\), while \(e_j\equiv0\pmod n\) otherwise. Since \(n\ge2\),
each exponent inside \(I\) is at least \(1\). Their total is already
\(\abs I\), so all exponents form \(\mathbf1_I\). All squarefree tuples
have no zero coordinate. Lemma~\ref{protection} applies. Notice that no
inequality between \(m\) and \(n\) was used.

For (2), put \(k=m-1\) and identify \(A_{m,1}\) with \(\Sc(a^m)\). In
its \(k\)th power set \(\alpha_i(j)=a^{1+\delta_{ij}}\). A squarefree
tuple has coordinate exponents
\(\abs I+\mathbf1_I(j)\le k+1=m\). If a degree-\(d\) monomial equals
this tuple, its coordinate exponents give
\[
d+e_j=\abs I+\mathbf1_I(j)\qquad(1\le j\le k).
\]
Summing gives \((k+1)d=(k+1)\abs I\), so \(d=\abs I\) and
\(e=\mathbf1_I\). Again Lemma~\ref{protection} applies.
\end{proof}

\section{Hypergraphs with constant-sum rigidity}\label{rigid-section}

We use Berge girth. A cycle of length \(\ell\ge2\) consists of \(\ell\)
distinct vertices and \(\ell\) distinct hyperedges in alternating
incidence. Girth is the shortest cycle length, or infinity. The incidence
graph has one node for each vertex and each hyperedge and joins them when
incident. A Berge cycle corresponds to an incidence cycle of twice the
length.

\begin{lemma}\label{expansion}
Fix integers \(k\ge3\), \(q\ge2\) and \(L\ge2\). There is a finite
\(k\)-uniform hypergraph \(H=(V,E)\) of girth greater than \(L\), with
no isolated vertices, such that, writing \(N=\abs V\):
\begin{enumerate}
\item every \(U\subseteq V\) with \(\abs U\ge N/q\) contains a
hyperedge;
\item every nonempty \(S\subseteq V\) with
\(\abs S\le(1-1/q)N\) has a hyperedge meeting it in exactly one vertex.
\end{enumerate}
\end{lemma}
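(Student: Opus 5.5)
We construct \(H\) probabilistically by the alteration method, following \cite{AlonSpencer}. Fix a large \(N_0\) and a density parameter. We choose a random \(k\)-uniform hypergraph on \(N_0\) vertices, including each \(k\)-set independently with probability \(p=N_0^{1-k+\varepsilon}\) for a small fixed \(\varepsilon>0\) (say \(\varepsilon<1/(2L)\)). Then the expected number of Berge cycles of length at most \(L\) is \(O(N_0^{\varepsilon L})=o(N_0)\). We delete one vertex from every short cycle, together with the edges through it. This leaves at least \(N_0/2\) vertices and girth greater than \(L\). We then delete isolated vertices, after checking that whp only \(o(N_0)\) vertices are isolated or lose all their edges. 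The expected degree is of order \(N_0^{\varepsilon}\), and at most \(o(N_0)\) vertices are removed, so at most a vanishing fraction of the remaining vertices become isolated by the deletions, via a degree-concentration count. Since the vertex count only shrinks by \(o(N_0)\), the final \(N\) is \((1-o(1))N_0\), and properties (1),(2) stated with slightly tightened constants in the random hypergraph transfer to \(H\).

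For property (1) we use a union bound with a Chernoff/Janson-type estimate. A fixed set \(U\) of size \(N_0/(2q)\) contains \(\binom{|U|}{k}\) potential edges, each present independently. So the probability that it spans no edge is \((1-p)^{\binom{|U|}{k}}\le\exp(-cN_0^{1+\varepsilon})\), which beats the \(2^{N_0}\) choices of \(U\). Since deletions remove only edges meeting deleted vertices, any \(U\subseteq V(H)\) with \(|U|\ge N/q\) still has an edge of the original hypergraph inside it, and that edge survives because all its vertices are in \(V(H)\). Also \(N/q\ge N_0/(2q)\).

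Property (2) is the main obstacle, since it is an expansion statement for all sizes up to \((1-1/q)N\). We split into two ranges.

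\emph{Large \(S\)}, with \(\delta N\le|S|\le(1-1/q)N\). The complement \(W=V\setminus S\) has \(|W|\ge N/q\). We count, in the original random hypergraph, edges with exactly one vertex in \(S\) and \(k-1\) in \(W\). The expected count is at least \(c\,|S|\,|W|^{k-1}p\gtrsim N_0^{1+\varepsilon}\). Chernoff plus a union bound over \(2^{N_0}\) choices of \(S\) shows that whp every such pair has \(\Omega(N_0^{1+\varepsilon})\) such edges. The deletions destroy at most \(o(N_0)\cdot\Delta\) edges, where \(\Delta=O(N_0^{\varepsilon}\log N_0)\) is the maximum degree whp. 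That is \(o(N_0^{1+\varepsilon})\), so some edge survives.

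\emph{Small \(S\)}, with \(|S|<\delta N\). Here we use high girth together with a local counting argument. Every vertex of \(S\) lies in some edge, since there are no isolated vertices. Suppose every edge meeting \(S\) meets it at least twice. Then the subhypergraph of edges meeting \(S\) has each such edge contributing at least two incidences with \(S\). We would like to show this forces a short cycle or too dense a small set. In the random hypergraph, whp every set \(T\) of at most \(\delta N_0\) vertices spans at most \(|T|/(k-1)\cdot(1+o(1))\) edges meeting it at least twice, by a first-moment count of dense small configurations. However, "each vertex has degree at least 1 and each edge meets \(S\) at least twice" is not dense enough on its own. We therefore strengthen the construction so that minimum degree is large: we delete low-degree vertices iteratively (a core-type alteration), so that every remaining vertex has degree at least \(2\). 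The incidence count is then \(\sum_{e}|e\cap S|\ge2|S|\), which gives at least \(|S|\) edges, each with at least two vertices in \(S\). A small set spanning that many such edges is excluded whp by the first-moment bound \(\sum_t\binom{N_0}{t}\binom{\binom{t}{2}N_0^{k-2}}{t}p^{t}\to0\) for \(t\le\delta N_0\). We must check this last estimate and that the iterative low-degree deletion removes only \(o(N_0)\) vertices; this bookkeeping is where I expect the real work to lie.
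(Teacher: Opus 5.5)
\paragraph{Verdict: there is a genuine gap, in property (2) for small $S$.}
Your argument for (1), and for (2) when $\abs S\ge\delta N$, is sound. In fact an edge of the random hypergraph with all its vertices in $V(H)$ survives, so you do not even need the deletion count there. The problem is (2) for $\abs S<\delta N$, and the route you propose there cannot work:
\begin{itemize}
\item The first-moment sum $\sum_t\binom{N_0}{t}\binom{\binom{t}{2}N_0^{k-2}}{t}p^t$ diverges. For $t\ge2$ its term is at least $(N_0/t)^t\bigl((t-1)N_0^{-1+\varepsilon}/2\bigr)^t\ge(N_0^{\varepsilon}/4)^t$.
\item The density statement it is meant to prove is false, not merely unproved. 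A fixed $t$-set is met in at least two vertices by about $t^2N_0^{-1+\varepsilon}/(2(k-2)!)$ edges on average. This exceeds $t$ once $t\gg N_0^{1-\varepsilon}$, which is well inside the range $t<\delta N_0$.
\item The count $\sum_e\abs{e\cap S}\ge2\abs S$ yields only $2\abs S/k$ edges, not $\abs S$.
\item Minimum degree $2$ together with high girth can never suffice. The cycle vertices of a long Berge cycle, each of degree exactly $2$, form a set met at least twice by every edge meeting it.
\item You also leave the core-type deletion cascade, and the loss of singleton edges through deleted vertices, unbounded. That is exactly where the work lies.
\end{itemize}

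\paragraph{The missing idea: a direct union bound.}
No structural argument is needed for small $S$. In the random hypergraph, take any $S$ with $s=\abs S\le(1-1/q)N$. The number $X_S$ of edges meeting $S$ exactly once is binomial with mean at least $c\,sN^{\varepsilon}$, so $\Pr(X_S<2s)\le e^{-csN^{\varepsilon}/8}$. Then
\[
\sum_s\binom{N}{s}e^{-csN^{\varepsilon}/8}\le\sum_s\bigl(eNe^{-cN^{\varepsilon}/8}\bigr)^s=o(1),
\]
which handles all sizes at once.

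\paragraph{What remains: an alteration that preserves (2).}
Vertex deletion gives you no local control here. A small $S$ lying near deleted vertices can lose all its singleton edges. The paper instead deletes one \emph{edge} from each cycle of length at most $L$. Before doing so, it shows that with high probability no two distinct such cycles have intersecting supports. Their union would have $v\le(k-1)e-1$ vertices, so the expected count is $O(N^{-1+2L\varepsilon})$ with $\varepsilon=1/(4L)$; your $\varepsilon<1/(2L)$ would also do. The deleted edges are then pairwise vertex-disjoint, which gives three things:
\begin{itemize}
\item each $S$ loses at most $\abs S$ of its at least $2\abs S$ singleton edges;
\item $N$ is unchanged;
\item (1) survives, since every large $U$ spans more than $N$ edges while at most $N$ are deleted.
\end{itemize}
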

\begin{proof}
Let \(\varepsilon=1/(4L)\) and form a random \(k\)-uniform hypergraph
\(H_0\) on \(N\) labelled vertices by including each \(k\)-set
independently with probability
\[
\pi=N^{-(k-1)+\varepsilon}.
\]
All constants below depend only on \(k,q,L\). We show that, with
probability tending to \(1\), three properties hold simultaneously.

\emph{Large subsets have many edges.} For a fixed \(U\) with
\(\abs U\ge N/q\), its number of edges is binomial with mean
\[
\mu_U=\binom{\abs U}{k}\pi\ge c_1N^{1+\varepsilon}
\]
for all sufficiently large \(N\) and a constant \(c_1>0\). The binomial
lower-tail estimate \(\Pr(X<\mu/2)\le\exp(-\mu/8)\) and a union bound
over at most \(2^N\) sets show that, with probability tending to \(1\),
every such \(U\) contains more than \(N\) edges.

\emph{Small subsets have many singleton intersections.} Fix a nonempty
\(S\) of size \(s\le(1-1/q)N\). The number \(X_S\) of edges meeting
\(S\) exactly once is binomial with mean
\[
\mu_S=s\binom{N-s}{k-1}\pi\ge c_2sN^\varepsilon
\]
for a constant \(c_2>0\). For large \(N\), \(\mu_S/2\ge2s\). A union
bound gives
\[
\Pr(\text{some }X_S<2\abs S)
\le\sum_{s=1}^{\lfloor(1-1/q)N\rfloor}
\binom Ns\exp(-c_2sN^\varepsilon/8)=o(1).
\]
Indeed, \(\binom Ns\le(eN/s)^s\le(eN)^s\), and
\(N^\varepsilon/\log N\to\infty\), so the sum is bounded by a geometric
series with ratio tending to zero.

\emph{Short cycles have disjoint edge supports.} Here the support of a
cycle means the union of all vertices in its hyperedges, including
vertices not selected as cycle vertices. Suppose two distinct Berge
cycles of lengths at most \(L\) have intersecting supports. Their union
has at most \(2L\) hyperedges and a connected incidence graph containing
two distinct cycles. If the union has \(e\) hyperedges and \(v\)
vertices, that graph has \(ke\) edges and \(e+v\) nodes. Its cycle rank
is at least \(2\), so
\[
ke-(e+v)+1\ge2,\qquad v\le(k-1)e-1.
\]
There are finitely many possible union types, since \(e\le2L\) and
\(v\le2kL\). The expected number of occurrences of any fixed type is at
most
\[
O(N^v\pi^e)
=O\bigl(N^{v-(k-1)e+\varepsilon e}\bigr)
=O\bigl(N^{-1+2L\varepsilon}\bigr)=O(N^{-1/2}).
\]
This includes cycles of length \(2\) and unions sharing hyperedges.
Cycles differing only by cyclic rotation or reversal are counted as the
same cycle. Markov's inequality proves that, with probability tending to
\(1\), no pair of distinct short cycles has intersecting supports.

Choose a realisation with all three properties. Delete one hyperedge from
each cycle of length at most \(L\). The deleted hyperedges are pairwise
vertex-disjoint; in particular, at most one deleted edge contains any
fixed vertex. Deletion cannot create a cycle, so the remaining hypergraph
\(H\) has girth greater than \(L\). At most \(N\) edges were deleted,
preserving an edge in every large \(U\). At most \(s\) deleted edges
could meet a fixed \(S\) exactly once, so at least \(2s-s=s>0\) such
edges remain. The singleton case gives no isolated vertices. Thus (1)
and (2) hold.
\end{proof}

\begin{corollary}\label{rigidity}
Let \(H\) be as in Lemma~\ref{expansion}. Let \(G\) be any abelian group
and let \(g:V(H)\to G\) have at most \(q\) distinct values. If
\[
\sum_{v\in e}g(v)=b\quad\text{for every }e\in E(H)
\]
with a fixed \(b\in G\), then \(g\) is constant.
\end{corollary}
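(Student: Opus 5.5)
The plan is a direct pigeonhole argument combining properties (1) and (2) of Lemma~\ref{expansion}. The girth condition is not needed here; it is presumably used later when \(H\) serves as a countermodel.

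First, since \(g\) takes at most \(q\) distinct values on the \(N\) vertices, some value \(c\in G\) has a fibre \(U=g^{-1}(c)\) with \(\abs U\ge N/q\). By property (1), \(U\) contains a hyperedge \(e_0\). Every vertex of \(e_0\) has value \(c\), so the constant-sum hypothesis gives
\[
b=\sum_{v\in e_0}g(v)=kc .
\]

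Next, suppose for contradiction that \(g\) is not constant, and let \(S=V\setminus U\). Then \(S\) is nonempty, and \(\abs S=N-\abs U\le(1-1/q)N\). Property (2) therefore supplies a hyperedge \(e_1\) meeting \(S\) in exactly one vertex \(v\). The remaining \(k-1\) vertices of \(e_1\) lie in \(U\) and take the value \(c\), so
\[
b=\sum_{u\in e_1}g(u)=(k-1)c+g(v).
\]

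Comparing the two expressions for \(b\) gives \(g(v)=c\), and cancellation in the group \(G\) is all this uses. Hence \(v\in U\), contradicting \(v\in S\). So \(g\) is constant.

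No step is a real obstacle. The only points to check are that the chosen fibre meets the size threshold in (1), and that its complement meets the size bound in (2). These two thresholds were chosen to match exactly: \(\abs U\ge N/q\) and \(\abs{V\setminus U}\le(1-1/q)N\) are equivalent.
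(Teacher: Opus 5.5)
Your proposal is correct and follows the paper's proof essentially step for step. You take the most frequent value's fibre \(U\) and use property (1) to get \(b=kc\), then apply property (2) to \(V\setminus U\), and cancellation in \(G\) yields the contradiction.
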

\begin{proof}
A most frequent value \(a\) occurs on a set \(U\) of size at least
\(N/q\). By property (1), \(U\) contains an edge, giving \(b=ka\). If
\(S=V\setminus U\) is nonempty, property (2) supplies an edge whose
unique vertex \(w\) outside \(U\) has value \(g(w)\ne a\). Its sum is
\((k-1)a+g(w)\), which equals \(ka\) only if \(g(w)=a\), a
contradiction.
\end{proof}

\begin{remark}
The group in Corollary~\ref{rigidity} need not be finite. Only the number
of values used by the assignment is bounded. We will use \(G=\Z_n\) for
nontrivial moduli and \(G=\Z\) for first-coordinate degrees in the
trivial-modulus column.
\end{remark}

\section{Local hyperforest membership}\label{local-section}

For \(k\ge3\) and a finite \(k\)-uniform hypergraph \(H\) of girth at
least \(5\) without isolated vertices, let \(S_H\) be the flat
commutative hypergraph semiring of \cite[Section 3]{JRZ}. Its vertex
generators are \(a_v\). A monomial in them is nonzero precisely when its
vertices are distinct and form a subset of a hyperedge. All full-edge
products have the same value \(\omega\ne0\). The only other
identifications of distinct nonzero monomials are between \((k-1)\)-subsets
linked by a common completing vertex. These are the normal forms of
\cite[Lemma 3.4]{JRZ}; transitivity of linking follows from its
Lemma~3.2. In particular \(a_v^2=0\) and \(S_H\) is finite.

We also use \cite[Lemma 4.2]{JRZ}: for a finite \(k\)-uniform hyperforest
\(F\) without isolated vertices, with \(k>2\), one has
\begin{equation}\label{forest}
S_F\in\V(Q_k).
\end{equation}

\begin{lemma}\label{local}
If \(r\ge1\) and
\[
\operatorname{girth}(H)>\max\left\{5,k\binom{kr}{2}\right\},
\]
then every at most \(r\)-generated subsemiring of \(S_H\) belongs to
\(\V(Q_k)\).
\end{lemma}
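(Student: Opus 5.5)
The plan is to show that an \(r\)-generated subsemiring of \(S_H\) only involves a bounded portion of \(H\), and that this portion is a hyperforest. We then embed the subsemiring into \(S_F\) for a suitable hyperforest \(F\) and apply \eqref{forest}.

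Let \(B\) be generated by \(b_1,\ldots,b_r\in S_H\). Each \(b_j\) is a sum of monomials, and by flatness every nonzero element is represented by a single normal-form monomial. A sum of distinct nonzero elements is \(0\). So every generator is either \(0\), \(\omega\), or one nonzero monomial, that is, a set of distinct vertices lying in a hyperedge, of size at most \(k\). Discarding \(0\) and \(\omega\) costs nothing, since \(\{0,\omega\}\) is closed and \(\omega\) is a product of vertex generators. Thus \(B\) is generated by at most \(r\) monomials, and the vertices involved form a set \(W\) with \(\abs W\le kr\).

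The relevant edges are those containing at least two vertices of \(W\). Only such edges matter for products of generators: a product of two nonzero monomials on distinct vertices is nonzero only when their union lies in a single hyperedge, and when a monomial meets an edge in one vertex, the edge is irrelevant to the other generators. Up to the \((k-1)\)-subset identifications of the normal forms, each pair of vertices of \(W\) lies in at most one hyperedge, because girth exceeds \(2\). So there are at most \(\binom{kr}{2}\) relevant edges, plus the at most \(r\) edges that each generator's monomial already determines. Let \(F\) be the sub-hypergraph formed by these edges, with at most \(\binom{kr}{2}+r\) edges. Any Berge cycle in \(F\) would have length at most the number of its vertices, which is at most \(k\binom{kr}{2}\) (a bound I adjust if needed). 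This is less than the girth of \(H\), so \(F\) is a hyperforest without isolated vertices.

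Next I define a map \(B\to S_F\) sending each monomial over the vertices of \(F\) to the same monomial in \(S_F\), and check that it is an injective homomorphism. Nonzeroness of a monomial depends only on whether its vertex set lies in an edge of \(F\). The identifications of \((k-1)\)-subsets in \(S_H\) through a common completing vertex must be matched in \(S_F\). This is the main obstacle: linked \((k-1)\)-sets in \(H\) that arise in \(B\) may be linked through edges outside \(F\), and the linking chains have to be shown to stay within edges I include. I would handle this by also adding to \(F\) every edge used in linking chains between \((k-1)\)-subsets generated in \(B\), noting that only boundedly many such subsets occur, and using the girth bound (\(\ge5\), together with Lemma 3.2 of \cite{JRZ}) to keep \(F\) acyclic. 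If this causes trouble, the fallback is to compare \(B\) with a quotient of a subsemiring of \(S_F\) via a collapsed ideal, since \(\V(Q_k)\) is closed under such operations. Once the embedding is established, \(B\in\V(S_F)\subseteq\V(Q_k)\) follows from \eqref{forest}.
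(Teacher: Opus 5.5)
Your architecture matches the paper's: a vertex set of size at most \(kr\), the edges meeting it in at least two vertices, girth forcing a hyperforest \(F\), and comparison with \(S_F\). But the one verification with real content is left open. You call the matching of the \((k-1)\)-subset identifications ``the main obstacle'' and propose adding to \(F\) the edges of linking chains. That repair is not carried out and cannot simply be bolted on. Edges meeting \(W\) in fewer than two vertices escape your injection into pairs of \(W\), so you would lose the edge count that lets the \emph{fixed} hypothesis \(\operatorname{girth}(H)>k\binom{kr}{2}\) force acyclicity, and that bound cannot be ``adjusted.''

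The missing observation is that nothing needs to be added.
\begin{itemize}
\item Since \(\operatorname{girth}(H)>2\), distinct edges share at most one vertex. So for \(k\ge3\) a \((k-1)\)-set lies in at most one edge and has a unique completing vertex.
\item Hence there are no chains through intermediate sets. \(P\) and \(P'\) are identified exactly when \(P\cup\{c\}\) and \(P'\cup\{c\}\) are edges for a single \(c\).
\item If \(P,P'\subseteq W\) (and every monomial of \(B\) is supported on \(W\)), each of these two edges contains \(k-1\ge2\) vertices of \(W\). So both are already relevant edges of \(F\), even when \(c\notin W\).
\item Conversely \(E(F)\subseteq E(H)\), so \(S_F\) has no nonzero monomial or identification absent from \(S_H\).
\end{itemize}
Combined with your observation that any edge containing two vertices of a monomial on \(W\) lies in \(F\), this gives agreement of zero/nonzero values and of all identifications. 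That is exactly the paper's argument.

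Two bookkeeping points also need repair.
\begin{itemize}
\item Your vertex bound \(k\binom{kr}{2}\) ignores the up to \(r\) extra edges. Either bound the cycle length by the number of edges, \(\binom{kr}{2}+r<k\binom{kr}{2}\), or do as the paper does: let \(X\) be the union of a full edge containing a normal form of each nonzero generator. Those edges meet \(X\) in \(k\ge2\) vertices, so they lie in \(E_X\) automatically. Then \(X\subseteq Y=\bigcup E_X\), \(\abs Y\le k\binom{kr}{2}\), \(F\) has no isolated vertices, and generators equal to \(\omega\) or to a single vertex need no special treatment.
\item The paper's choice of \(X\) also replaces your ``discarding \(\omega\)'' step, which is not literally correct: for instance, \(\omega\) is not in the subsemiring generated by a single \(a_v\). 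One simply notes that \(B\) lies in the subsemiring generated by the \(a_x\), \(x\in X\), and that this subsemiring is isomorphic to the corresponding one in \(S_F\).
\end{itemize}
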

\begin{proof}
Let \(T\) be generated by at most \(r\) elements. If all are zero, the
result is immediate. Otherwise choose for each nonzero generator a subset
normal form and a full edge containing it. Let \(X\) be the union of
those edges, so \(\abs X\le kr\). The subsemiring \(T^+\) generated by
\(a_x\) for \(x\in X\) contains \(T\), including any zero generators
since \(a_x^2=0\).

Let \(E_X\) be all edges meeting \(X\) in at least two vertices, set
\(Y=\bigcup E_X\), and put \(F=(Y,E_X)\). Distinct edges of \(H\) meet
in at most one vertex. Choosing a pair from each intersection
\(e\cap X\) therefore injects \(E_X\) into the pairs of \(X\). Hence
\[
\abs{E_X}\le\binom{\abs X}{2},\qquad
\abs Y\le k\binom{kr}{2}.
\]
All originally chosen edges lie in \(E_X\), so \(X\subseteq Y\). A cycle
of \(F\) would be a cycle of \(H\) of length at most \(\abs Y\), a
contradiction. Thus \(F\) is a hyperforest with no isolated vertices.

For monomials supported on \(X\), zero and nonzero values agree in
\(S_H\) and \(S_F\): every edge containing at least two of their vertices
lies in \(E_X\), and repetitions give zero in both. Full-edge products
are identified in both. If two \((k-1)\)-subsets of \(X\) are linked in
\(H\), both completing edges belong to \(E_X\) since \(k-1\ge2\); thus
they are linked in \(F\). Conversely, no new identification arises
because every edge of \(F\) belongs to \(H\). The normal forms show that
equality and multiplication of monomials agree. Flat addition agrees as
well. Therefore the subsemiring of \(S_F\) generated by these vertices is
isomorphic to \(T^+\). Now \eqref{forest} and closure under subalgebras
give \(T\in\V(Q_k)\).
\end{proof}

The enlargement step is the one needed to account for completing vertices
outside \(X\); compare \cite[proof of Theorem 4.9]{JRZ}. It does not
assert that \(S_F\) itself embeds into \(S_H\).

\section{Separating identities and the nonfinite basis region}

For a \(k\)-uniform hypergraph \(H\) and a fixed \(u\in V(H)\), put
\[
t_H=\sum_{e\in E(H)}\prod_{v\in e}x_v,
\qquad
\eta_H:\quad t_H\id t_H+x_u^k.
\]
Fixed orderings and bracketings turn these expressions into terms in the
binary signature. Their choices do not matter in the commutative
semirings considered here.

\begin{lemma}\label{separation}
In either of the following cases, choose \(H\) as in
Lemma~\ref{expansion}, with girth also at least \(5\):
\begin{enumerate}
\item \(m\ge3,n\ge2\), \(k=m\) and \(q=n\);
\item \(m\ge4,n=1\), \(k=m-1\) and \(q=m\).
\end{enumerate}
Then \(\eta_H\) holds in \(A_{m,n}\) and fails in \(S_H\).
\end{lemma}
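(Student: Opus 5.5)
The plan has two parts: show that \(\eta_H\) fails in \(S_H\), and show that it holds in \(A_{m,n}\).

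\emph{Failure in \(S_H\).} Assign \(x_v\mapsto a_v\). Every hyperedge monomial evaluates to \(\omega\), so the left side is \(\omega\). On the right, \(a_u^k=0\) because \(a_u^2=0\) and \(k\ge3\). Hence the right side is \(0\ne\omega\), and \(\eta_H\) fails.

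\emph{Validity in \(A_{m,n}\).} By flatness it suffices to consider an assignment \(x_v\mapsto c_v\) with \(t_H\ne0\) and prove that \(c_u^k\) equals the common value of \(t_H\). Indeed, if \(t_H=0\) both sides are \(0\), and if \(t_H=w\ne0\) the right side is \(w+c_u^k\), which equals \(w\) exactly when \(c_u^k=w\). So assume every edge product equals a fixed nonzero \(w\). Since \(H\) has no isolated vertices, every \(c_v\) is nonzero; write \(c_v=(d_v,g_v)\) with \(d_v\ge1\).

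In case (1), \(k=m\), so an edge product is nonzero only if \(\sum_{v\in e}d_v\le m=k\). This forces \(d_v=1\) for all \(v\), and then \(w=(m,b)\) with \(\sum_{v\in e}g_v=b\) for every edge. The map \(g\) takes at most \(n=q\) values in \(G=\Z_n\), so Corollary~\ref{rigidity} makes \(g\) constant, equal to some \(a\). Then \(b=ka\) and \(c_u^k=(m,ka)=w\).

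In case (2), \(n=1\) and \(k=m-1\). Each edge has \(\sum_{v\in e}d_v=D\) for a fixed \(D\le m\), where \(w=(D,\overline0)\) and the degrees satisfy \(1\le d_v\le m-k+1=2\). This bound holds because the other \(k-1\) vertices of an edge containing \(v\) each contribute at least \(1\). Thus \(d\) takes at most \(2\le q\) values in \(G=\Z\), and Corollary~\ref{rigidity} makes \(d\) constant, equal to \(\delta\). Then \(D=k\delta\) and \(c_u^k=(k\delta,\overline0)=w\).

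The only delicate point is the bookkeeping in the flatness reduction: a sum of terms in a flat semiring is nonzero exactly when all summands are equal and nonzero. The girth condition is not used for validity; it is needed only for the later hyperforest membership argument.
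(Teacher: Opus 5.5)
Your proof is correct and follows the paper's argument: the same flatness reduction, the same use of Corollary~\ref{rigidity} in \(\Z_n\) for case (1) and in \(\Z\) for case (2), and the same assignment \(x_v\mapsto a_v\) showing failure in \(S_H\). The one minor variation is in case (2): you bound \(d_v\le 2\) in advance, so only two values occur, and read off \(c_u^k=w\) directly, whereas the paper allows up to \(m=q\) values and afterwards deduces \(d_v=1\) from \(ka\le k+1\).
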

\begin{proof}
If an evaluation gives \(t_H=0\) in \(A_{m,n}\), both sides of
\(\eta_H\) are zero. Otherwise every edge product has the same nonzero
value. Since there are no isolated vertices, every vertex variable has a
nonzero value \((d_v,g_v)\).

In case (1), each edge has \(m\) vertices and its positive
first-coordinate sum is at most \(m\), so \(d_v=1\) for all vertices.
The second coordinates have a common edge sum in \(\Z_n\). They take at
most \(n\) values; Corollary~\ref{rigidity} therefore gives \(g_v=g\)
for every vertex. Every edge product and \(x_u^m\) then equal
\((m,mg)\), proving \(\eta_H\).

In case (2), the group is trivial. The positive integers
\(d_v\in\{1,\ldots,m\}\) have a common edge sum \(d\le m\). Apply
Corollary~\ref{rigidity} in the additive group \(\Z\); the assignment
uses at most \(m=q\) values. Thus every \(d_v\) equals an integer
\(a\ge1\). Now \(ka=d\le m=k+1\) and \(k\ge3\), so \(a=1\). Each edge
product and \(x_u^k\) have value \((k,\overline0)\), proving the
identity.

In \(S_H\), assign \(x_v=a_v\). Then \(t_H=\omega\ne0\) whereas
\(a_u^k=0\) since \(a_u^2=0\). Thus the right side equals \(0\), and the
identity fails.
\end{proof}

\begin{theorem}\label{nfb}
If \(m\ge3,n\ge2\), or \(m\ge4,n=1\), then \(A_{m,n}\) has no
equational basis with a uniform bound on the number of variables.
\end{theorem}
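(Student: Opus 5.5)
The plan is to assemble the pieces already in place: for each \(r\ge1\), build a finite semiring that satisfies every identity of \(A_{m,n}\) in at most \(r\) variables but fails \(\eta_H\). Fix \(r\) and the case data of Lemma~\ref{separation}: either \(k=m\), \(q=n\) when \(m\ge3,n\ge2\), or \(k=m-1\), \(q=m\) when \(m\ge4,n=1\). In both cases \(k\ge3\) and \(q\ge2\). Take
\[
L=\max\left\{5,k\binom{kr}{2}\right\}
\]
and apply Lemma~\ref{expansion} to obtain a finite \(k\)-uniform hypergraph \(H\) with no isolated vertices and girth greater than \(L\). This girth is at least \(5\), so \(S_H\) is defined, it is finite, and Lemma~\ref{separation} applies to it. The countermodel will be \(S_H\) itself.

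\emph{Step 1: \(S_H\notin\V(A_{m,n})\).} By Lemma~\ref{separation}, the identity \(\eta_H\) holds in \(A_{m,n}\) but fails in \(S_H\).

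\emph{Step 2: \(S_H\) satisfies every identity of \(A_{m,n}\) in at most \(r\) variables.} Let \(p\id p'\) be such an identity, and fix any evaluation of its variables in \(S_H\). Its values lie in the subsemiring \(T\) generated by the at most \(r\) chosen elements. By Lemma~\ref{local}, whose girth hypothesis is exactly our choice of \(L\), we have \(T\in\V(Q_k)\). Next we show \(Q_k\in\V(A_{m,n})\) using Proposition~\ref{divisor}:
\begin{enumerate}
\item if \(n\ge2\), then \(k=m\) and part (1) gives \(Q_m\in\V(A_{m,n})\);
\item if \(n=1\), then \(k=m-1\) and part (2) gives \(Q_{m-1}\in\V(A_{m,1})\).
\end{enumerate}
Hence \(T\in\V(A_{m,n})\), so \(T\) satisfies \(p\id p'\), and the chosen evaluation agrees on both sides. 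Since the evaluation was arbitrary, \(S_H\) satisfies \(p\id p'\).

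\emph{Step 3: conclusion.} Suppose \(\Sigma\) were a basis for \(A_{m,n}\) whose identities use at most \(r\) variables each. By Step 2, \(S_H\) is a model of \(\Sigma\), so \(S_H\in\V(A_{m,n})\). This contradicts Step 1. Any finite basis has such a uniform bound \(r\), so \(A_{m,n}\) is nonfinitely based. This also yields the last sentence of Theorem~\ref{main}, with \(S_H\) as the finite countermodel, since \(S_H\) is a finite ai-semiring.

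The substantive difficulties are already handled in Lemmas~\ref{expansion}, \ref{local} and~\ref{separation}, so little remains. The step needing most care is matching parameters. The same \(H\) must satisfy three requirements at once:
\begin{enumerate}
\item the rigidity parameter \(q\) needed by Lemma~\ref{separation};
\item girth at least \(5\), needed for \(S_H\) to be defined;
\item girth exceeding \(k\binom{kr}{2}\), needed by Lemma~\ref{local}.
\end{enumerate}
Lemma~\ref{expansion} allows all three, since \(L\) is arbitrary there. The remaining check is that the uniformity \(k\) matches the index of the squarefree divisor \(Q_k\) provided by Proposition~\ref{divisor} in each case, which the case split in Step 2 confirms.
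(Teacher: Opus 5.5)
Your proposal is correct and follows the paper's proof essentially step for step. It makes the same choice \(L=\max\{5,k\binom{kr}{2}\}\) in Lemma~\ref{expansion} and combines Lemma~\ref{local} with Proposition~\ref{divisor} to show that \(S_H\) satisfies every identity of \(A_{m,n}\) in at most \(r\) variables, while \(\eta_H\) from Lemma~\ref{separation} excludes \(S_H\) from \(\V(A_{m,n})\); the contradiction with a uniformly bounded basis is then drawn exactly as in the paper.
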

\begin{proof}
Fix \(r\ge1\) and choose \(k,q\) according to Lemma~\ref{separation}.
Choose \(H\) in Lemma~\ref{expansion} with
\[
L=\max\left\{5,k\binom{kr}{2}\right\}.
\]
By Lemma~\ref{local} and Proposition~\ref{divisor}, every at most
\(r\)-generated subsemiring of \(S_H\) belongs to
\[
\V(Q_k)\subseteq\V(A_{m,n}).
\]
Every evaluation of an identity on at most \(r\) variables lies in such a
subsemiring, so \(S_H\) satisfies all those identities of \(A_{m,n}\).
But Lemma~\ref{separation} gives an identity of \(A_{m,n}\) failing in
\(S_H\). Thus \(S_H\notin\V(A_{m,n})\).

If an identity basis had a uniform variable bound \(r\), the same \(S_H\)
would satisfy that basis and hence belong to the generated variety, a
contradiction. In particular, no finite basis exists.
\end{proof}

\begin{proof}[Proof of Theorem~\ref{main}]
Theorems~\ref{null} and \ref{rowtwo} treat \(m=1,2\),
Theorem~\ref{cube} treats \((m,n)=(3,1)\), and Theorem~\ref{nfb} treats
every remaining pair of positive integers.
\end{proof}

\section{Consequences and scope}

\begin{corollary}
For the diagonal family \(A_p=A_{p,p}\) with \(p\) an arbitrary positive
integer,
\[
A_p\text{ is finitely based}\quad\Longleftrightarrow\quad p\in\{1,2\}.
\]
For fixed \(n\ge2\), the threshold is always \(m=2\), whereas for \(n=1\)
it is \(m=3\).
\end{corollary}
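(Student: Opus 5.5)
This is a direct specialisation of Theorem~\ref{main}, so I will read it off the classification with $m=n=p$. The plan is to check the condition ``$m\le2$ or $(m,n)=(3,1)$'' on the diagonal $m=n=p$.

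\emph{The case $p\le2$.} If $p\le 2$, then $m=p\le2$. In this case $A_{p,p}$ is finitely based: Theorem~\ref{null} covers $p=1$ and Theorem~\ref{rowtwo} covers $p=2$. For the latter, the table entry with $n=2$ is the identity $x^2+yz\id x^2+yz+y^2$, and the variety generated is $\V(R_{2,0},R_{0,1})$.

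\emph{The case $p\ge3$.} If $p\ge3$, then $m=p\ge3$ and $n=p\ge3\ge2$. So $(m,n)\ne(3,1)$ and $m>2$, and Theorem~\ref{nfb}, case $m\ge3,n\ge2$, shows that $A_{p,p}$ has no finite basis. Together the two cases give the equivalence $A_p$ finitely based $\Leftrightarrow p\in\{1,2\}$.

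\emph{The threshold statements.} For fixed $n\ge2$, the pair $(m,n)$ is never $(3,1)$. So Theorem~\ref{main} says $A_{m,n}$ is finitely based exactly when $m\le2$, and the threshold is $m=2$. For $n=1$ the condition becomes $m\le2$ or $m=3$, that is $m\le3$, so the threshold is $m=3$. This agrees with the Wu--Zhao--Ren classification of $\Sc(a^m)$ cited in the introduction.

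There is no real obstacle here. The only point to confirm is that Theorem~\ref{nfb} was proved for all integers $n\ge2$, with no primality or coprimality hypothesis. This holds because Corollary~\ref{rigidity} is applied in $\Z_n$ for an arbitrary modulus, and Proposition~\ref{divisor}(1) uses only $n\ge2$. So composite diagonal values such as $p=4$ or $p=6$ are covered as well, which is what extends the prime-parameter result of \cite{GRsource}.
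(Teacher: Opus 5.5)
Your proposal is correct and takes the same route as the paper, which states this corollary without separate proof as a direct specialisation of Theorem~\ref{main}, using that \((p,p)\ne(3,1)\) for every \(p\) and that the exceptional pair never occurs when \(n\ge2\). Your check that Theorem~\ref{nfb} carries no primality hypothesis on \(n\) is the right point to confirm for composite \(p\).
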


The modulus does influence the equational theory even where it does not
influence finite basability. Theorem~\ref{rowtwo} distinguishes four
varieties in the row \(m=2\). For example, a square-root uniqueness
identity holds for odd moduli and fails for even moduli. This does not
produce an additional finite-basis threshold in that row.

The proof does not infer nonfinite basability merely from the presence of
a nonfinitely based divisor variety. Such an implication is false in
general. The squarefree divisors provide the local membership condition,
while the separately proved identities \(\eta_H\) exclude the entire
hypergraph semirings from the target variety.

Finally, the boundary \((3,1)\) cannot be treated by the nonfinite-basis
construction. There the available squarefree divisor has length
\(m-1=2\), whereas the imported hyperforest theorem and the local linking
argument require \(k\ge3\). Its positive classification has an
independent finite-basis proof in Section~\ref{exception-section}. All
conclusions concern the constant-free binary signature specified at the
outset.

\end{document}